\newtheorem{thm}{Theorem}[section] 
\newtheorem{cor}[thm]{Corollary}
\newtheorem{lem}[thm]{Lemma}
 \newtheorem{prop}[thm]{Proposition}
\theoremstyle{remark} 
\newtheorem{rem}[thm]{Remark}
\newcommand{\EQ}[1]{\begin{align}\begin{split} #1 \end{split}\end{align}}
\newcommand{\EQn}[1]{\begin{align}\begin{split} #1 \end{split}\end{align}}
\newcommand{\EQnn}[1]{\begin{align} #1 \end{align}}
\newcommand{\CAS}[1]{\begin{cases} #1 \end{cases}}
\newcommand{\enu}[1]{\begin{enumerate}[label=(\alph*)] #1 \end{enumerate}}
\def\ncase#1{\left\{\begin{aligned}#1\end{aligned}\right.}
\newcommand{\Del}[1]{}
\def\norm#1{\left\|#1\right\|}
\def\normo#1{\left\|#1\right\|}
\def\abs#1{|#1|}
\def\aabs#1{\left|#1\right|}
\def\brk#1{\left(#1\right)}
\def\fbrk#1{\left\lbrace#1\right\rbrace} 
\def\jb#1{\langle#1\rangle}
\def\wb#1{\overline{#1}}
\def\pd{\partial}
\def\pdt{\partial_{t}}
\def\pdk{\partial_{k}}
\def\pdj{\partial_{j}}
\def\pdr{\partial_{r}}
\newcommand{\ra}{{\rightarrow}}
\def\loe{\leqslant}
\def\goe{\geqslant}
\def\lsm{\lesssim}
\def\gsm{\gtrsim}
\newcommand{\N}{{\mathbb N}}
\newcommand{\R}{{\mathbb R}}
\newcommand{\C}{{\mathbb C}}
\newcommand{\K}{{\mathcal{K}}}
\def\dx{\text{\ d} x}
\def\ds{\text{\ d} s}
\def\dt{\text{\ d} t}
\def\dive{\text{\rm div}}
\def\ep{\varepsilon}
\def\al{\alpha}
\def\be{\beta}
\def\om{\omega}
\def\ph{\varphi}
\def\de{\delta}
\def\De{\Delta}
\def\ka{\kappa}
\def\ta{\tau}
\def\la{\lambda}
\newcommand{\I}{\infty}
\def\rev#1{\frac{1}{#1}}
\def\half#1{\frac{#1}{2}}
\def\ut{u_{t}}
\def\pk{P_{k}}
\def\chir{\chi_{R}}
\def\jc{J^{(c)}}
\def\kab{K_{\al_1,\be_1}}
\def\es{E_{\text{S}}}
\def\ek{E_{\text{K}}}
\numberwithin{equation}{section} 
\title[2D NLS]{Scattering below the ground state for the 2D non-linear Schr\"{o}dinger and Klein-Gordon equations revisited}
\subjclass[2010]{35L70, 35B40, 35B44}
\keywords{Schr\"odinger equation, Klein-Gordon equation, Scattering}
\author{Zihua Guo}
\address{(Z. Guo) School of Mathematics, Monash University, Melbourne, VIC 3800, Australia}
\email{zihua.guo@monash.edu}
\author{Jia Shen}
\address{(J. Shen) School of Mathematical Sciences, Peking University, No 5. Yiheyuan Road, Beijing 100871, P.R.China}
\email{shenjia@pku.edu.cn}
\begin{document}
\maketitle

\begin{abstract}
We revisit the scattering problems for the 2D mass super-critical Schr\"{o}dinger and Klein-Gordon equations with radial data below the ground state in the energy space. We give an alternative proof of energy scattering for both defocusing and focusing cases using the ideas in [B. Dodson and J. Murphy, Proceedings of the American Mathematical Society, 145, 4859 (2017)]. Our results also include the exponential type nonlinearities which seems to be new for the focusing exponential NLS.
\end{abstract}


\section{Introduction}
In this note, we consider the non-linear Schr\"{o}dinger (NLS) equation
\EQn{\label{equ-main-nls}
i\pdt u -\De u = & f(u), \\ 
u(0,x) = & u_0(x),
}
and the non-linear Klein-Gordon (NLKG) equation
\EQn{\label{equ-main-kg}
	\pdt^2 u -\De u + u = & f(u), \\
	u(0,x) = & u_0(x), \\
	\ut(0,x) = & u_1(x),
}
where $u(t,x):\R\times\R^2\ra \C$. Throughout this paper we assume
\EQn{\label{assume-nl-term}
f(u)=\la \abs{u}^p u 	\quad \text{ or } \quad f(u) = \la \brk{e^{\ka_0\abs{u}^2}-1-\ka_0\abs{u}^2}u,
}
where $p>2$, $\kappa_0>0$, and $\la=1$ (focusing case) or $\la=-1$ (defocusing case). We define $F(u):\C\ra\R$ satisfies $F(0)=0$ and $\pd_{\wb{u}}F(u)=f(u)$, namely 
\EQn{\label{assume-potential-flow}
F(u)=\frac{2\lambda}{p+2}|u|^{p+2} \quad \mbox{ or } \quad F(u)=\frac{\lambda}{\ka_0} (e^{\ka_0\abs{u}^2}-1-\ka_0\abs{u}^2-\frac{\kappa_0^2}{2}|u|^4).} 
The NLS has conserved energy
\EQn{
	\es(u(t)) = \int_{\R^2} \half 1 \abs{\nabla u(t,x)}^2 - \half 1 F(u(t,x)) \dx,
}
and mass
\EQn{
	M(u(t)) = \int_{\R^2} \abs{u(t,x)}^2 \dx.
}
The NLKG has conserved energy
\EQn{
	\ek(u,\ut) = \int_{\R^2} \half 1 \abs{\nabla u(t,x)}^2 + \half 1 \abs{u(t,x)}^2 + \half 1 \abs{\ut(t,x)}^2 - \half 1 F(u(t,x)) \dx.
}

There are extensive results on the large data scattering in energy space for both NLS and NLKG. We only refer to the 2D results, in which case the classical (linear) Morawetz estimate breaks down even for the defocusing problems. We first recall the results for mass super-critical power type non-linear terms ($p>2$). For defocusing NLS and NLKG, Nakanishi \cite{nakanishi1999energy} introduced a new type of Morawetz estimate, and combined with induction on energy argument to prove scattering in  the energy space. For focusing NLKG, in \cite{ibrahim2011scattering}, Ibrahim, Masmoudi, and Nakanishi proved large data scattering for solutions with energy below ground state. Inui \cite{Inui} extended the results to the complex-valued Klein-Gordon equations.  Similar results for focusing NLS were also obtained in  \cite{fang2011scattering,AN10Kyoto} and \cite{guevara2013global}. The scattering of mass critical NLS ($p=2$) is more difficult and has also been solved (see \cite{KTV, Dodson1,Dodson2}), but this case is beyond our method in this paper.

Next, we recall the scattering results for the exponential type non-linear terms. This is the energy critical for 2D and is closely related to the Trudinger-Moser inequalilty.  Global well-posedness and scattering with small energy data was proved by Nakamura-Ozawa \cite{NaOz, NaOz2, NaOz3} (see \cite{wang2006energy} for more general nonlinearity). The exact size of data for well-posedness was investigated in \cite{CIMM} and a notion of criticality was proposed. In our notations, for NLS they proved global well-posedness for $E_S(u_0)\leq 2\pi/\ka_0$ and some ill-posedness for $E_S(u_0)>2\pi/\ka_0$. See \cite{IMM} and \cite{ibrahim2007ill} for the results on NLKG. For the large data scattering, it was proved in \cite{ibrahim2009scattering} for the defocusing NLKG with $\ek(u_0,0)\loe 2\pi/\ka_0$, and in \cite{ibrahim2012scattering} for the defocusing NLS  in the sub-critical region $\es(u_0) < 2\pi/\ka_0$. When $\es(u_0) = 2\pi/\ka_0$, the scattering was obtained in \cite{bahouri2014scattering} for radial data.  For focusing NLKG, scattering for solutions with energy below the ground state was proved in \cite{ibrahim2011scattering}.   It seems to us that the focusing exponential NLS was not studied.
Lastly, compared to NLS or NLKG results, the 2D defocusing exponential wave equation
\EQ{\label{eq:2Dwave}
\pd_t^2 u -\De u  + \brk{e^{|u|^2} - 1 - |u|^2}u =0
}
can be considered without any size or symmetry restriction. Sack and Struwe \cite{SS16MathAnn} established scattering for \eqref{eq:2Dwave} with arbitrary smooth and compactly supported initial data.

We remark that all the large data scattering results for the focusing problems mentioned above rely on Kenig-Merle's concentration compactness/rigidity method \cite{KM06Invent}. Recently, Dodson and Murphy (\cite{dodson2017new-radial, dodson2017new-nonradial}) used the ideas of combined virial and Morawetz estimates (first by Ogawa-Tsutsumi \cite{ogawa1991blow} for blowup problems) in the scattering problems. They gave a simple proof of the scattering for focusing $\dot{H}^{1/2}$-critical NLS in dimensions three and higher. For the two dimensional case, some new difficulty arises due to the weak time decay rate $t^{-1}$ of the linear propagator. Our purpose is to extend Dodson and Murphy's method to 2D. We exploit additional decay from the virial-Morawetz estimates to overcome the logarithmic divergence of time integral in 2D \footnote{We noticed that  similar results for 2D NLS with nonlinear term $|u|^pu$ was obtained very recently in \cite{Dodson2019} by similar ideas.}. 

In the focusing case $\la = 1$, we study the solutions with energy below the ground state. We introduce some notations on the variational analysis (from \cite{ibrahim2011scattering}) in the $d$ dimension although we will only need it for $d=2$. Let $(\al_1,\be_1)\in \R^2$ such that
\EQn{\label{range-al-be}
	\text{$\al_1\goe0$, $2\al_1+d\be_1\goe 0$, $2\al_1+(d-2)\be_1\goe 0$, and $(\al_1,\be_1)\ne(0,0)$.}
}
For $c\goe 0$ and $\ph\in H^1(\R^d)$, define the static energy
\EQn{
	\jc(\ph) := \half 1 \int_{\R^d} \abs{\nabla \ph}^2\dx + \half c \int_{\R^d} \abs{\ph}^2\dx - \half 1 \int_{\R^d} F(\ph)\dx.
}
Let 
\EQn{
	j_{\al_1,\be_1}^{(c)}(\la) = \jc(e^{\al_1\la}\ph(e^{-\be_1\la}x))
}
and
\EQn{
	\kab^{(c)}(\ph) = &  \pd_{\la}|_{\la=0}j_{\al_1,\be_1}^{(c)}(\la)\\
	= & \half{2\al_1 + (d-2)\be_1} \int \abs{\nabla \ph}^2\dx + \half{2\al_1 + \be_1 d} c \int \abs{ \ph}^2\dx\\
	& - \half{1} \int \brk{2\al_1 \Re\brk{\pd_u F(\ph) \ph} + d\be_1 F(\ph)} \dx.
}
We omit the super-script $c$ when $c=1$. 
We also take the quadratic part of $\kab$, i.e. 
\EQn{
	\kab^{\text{Q}}(\ph) = \half{2\al_1 + (d-2)\be_1} \int \abs{\nabla \ph}^2\dx + \half{2\al_1 + \be_1 d} \int \abs{ \ph}^2\dx.
}
Let
\EQn{
	m_{\al_1,\be_1} = \inf\fbrk{J(\ph):\text{$\ph\in H^1\backslash\fbrk{0}$ and $\kab(\ph)=0$}}.
}

The purpose of this paper is to give an alternative proof for the following theorems. The results were proved before without radial assumption (see \cite{ibrahim2009scattering, ibrahim2012scattering, ibrahim2011scattering}) except for the focusing exponential NLS. 
\begin{thm}
	Suppose that $u_0\in H^1(\R^2)$ is radial, $f(u)$ satisfies \eqref{assume-nl-term} with $p>2$ and $(\al_1,\be_1)\in\R^2$ satisfies \eqref{range-al-be}. Then 
	\enu{
	\item (Defocusing power type case)
	If $f(u)=-\abs{u}^p u$, then the solution of \eqref{equ-main-nls} exists globally and scatters.
	\item (Defocusing exponential case)
	If \EQ{f(u)=-\brk{e^{\ka_0\abs{u}^2}-1-\ka_0\abs{u}^2}u,} and $\es(u_0)<2\pi/\ka_0$, then the solution of \eqref{equ-main-nls} exists globally and scatters.
	\item (Focusing case)
	If $\la=1$, $\es(u_0) + M(u_0)/2<m_{\al_1,\be_1}$ and $\kab(u_0)\goe 0$, then the solution of \eqref{equ-main-nls} exists globally and scatters.
	} 
\end{thm}

\begin{thm}
	Suppose that $\brk{u_0,u_1}\in H^1(\R^2)\times L^2(\R^2)$ is radial, $f(u)$ satisfies \eqref{assume-nl-term} with $p>2$ and $(\al_1,\be_1)\in\R^2$ satisfies \eqref{range-al-be}. Then
	\enu{
		\item (Defocusing power type case)
		If $f(u)=-\abs{u}^p u$, then the solution of \eqref{equ-main-kg} exists globally and scatters.
		\item (Defocusing exponential case)
		If \EQ{f(u)=-\brk{e^{\ka_0\abs{u}^2}-1-\ka_0\abs{u}^2} u,} and $\ek(u_0,0)<2\pi/\ka_0$, then the solution of \eqref{equ-main-kg} exists globally and scatters.
		\item (Focusing case)
		If $\la=1$, $\ek(u_0,u_1)<m_{\al_1,\be_1}$ and $\kab(u_0)\goe 0$, then the solution of \eqref{equ-main-kg} exists globally and scatters. 	}
\end{thm}

\begin{rem}
\enu{
\item 

The above result for the focusing exponential NLS seems new. However, we comment that one may remove the radial assumption by using Kenig-Merle's approach \cite{KM06Invent} as in \cite{ibrahim2011scattering}. 

\item 
By variational analysis, under the assumption $\es(u_0) + M(u_0)/2<m_{\al_1,\be_1}$	or $\ek(u_0,u_1)<m_{\al_1,\be_1}$, $K_{\al_1,\be_1}(u_0)=0$ implies $u_0=0$. Note that $(0,u_1)$ can lead to non-trivial solution for NLKG. Therefore, to maintain consistency, we include the  $K_{\al_1,\be_1}(u_0)=0$ case in the above two theorems.
\item
In the focusing case, the threshold $m_{\alpha_1,\beta_1}$ is related to the ground state. For example, for power type nonlinearity, $m_{\alpha_1,\beta_1}=J(Q_0)$, 
where $Q_0$ is the unique radial solution of
\EQ{
	-\De Q_0 + Q_0 = \aabs{Q_0}^{p}Q_0.
}
For complex-valued Klein-Gordon equations with power type nonlinearity $|u|^pu$, we can obtain radial scattering below the standing wave solutions as Inui \cite{Inui}. More precisely, let $\om\in[0,1)$ and $Q_\om$ be the unique positive solution of 
\EQ{
	-\De Q_\om + \brk{1-\om^2}Q_\om=f(Q_\om).
}
Scattering for NLKG holds under the following assumptions: $p>2$, $(\al_1,\be_1)\in \eqref{range-al-be}$, $(u_0,u_1)\in H^1(\R^2)\times L^2(\R^2)$ is radial,
\EQ{
& E_K(u_0,u_1) - \omega \aabs{\Im\int_{\R^2} \wb{u}_0 u_1 \dx}  \\
< & \inf\fbrk{J^{(1-\om^2)}(\ph):\text{$\ph\in H^1\backslash\fbrk{0}$ and $\kab(\ph)=0$}}\\
= & J^{(1-\om^2)}(e^{\pm i\om t}Q_\om),
}
and $K_{\al_1,\be_1}(u_0)\goe 0$.

\item In view of the non-radial results in \cite{dodson2017new-nonradial}, it is natural and interesting to pursue whether one can remove the radial assumption in the 2D focusing case. However, our argument relies heavily on the radial symmetry to obtain the enhanced decay estimate from the virial-Morawetz estimate.  The reproof of non-radial NLKG problem is also unclear  in any dimension due to the lack of interaction Morawetz estimates.
}
\end{rem}

\section{Power type nonlinearity}
In this section, we take $f(u)=\lambda |u|^pu$ in equations \eqref{equ-main-nls} and \eqref{equ-main-kg} with $\la=\pm1$. 
To start with, we recall some classical Strichartz estimates for Schr\"odinger and Klein-Gordon equations (see \cite{KeelTao}).
\begin{prop}
Assume that $u_0(x)\in L^2(\R^d)$, then for any $(q,r)$ satisfying $2 \loe q,r\loe +\I$, $(q,r,d)\ne (2,\I,2)$ and
\EQ{
\rev{q}  \loe  \frac{d}{2}\brk{\half 1 - \rev{r}},
}
we have
\EQ{
\norm{e^{it\De}u_0}_{L_t^q B_{r,2}^{\frac{2}{q}+\frac{d}{r}-\frac{d}{2}}} \lsm& \norm{u_0}_{L_x^2} \label{esti-strz-nls}
}
and
\EQ{
\norm{e^{it\jb{D}}u_0}_{L_t^q B_{r,2}^{\beta(q,r)}} \lsm& \norm{u_0}_{L_x^2} \label{esti-strz-kg}
}
where
\EQ{
\beta(q,r)=\CAS{
-\frac{1}{q}+\frac{1}{r}-\frac{1}{2}, \quad 
\frac{d-1}{2}\brk{\half 1 - \rev{r}} \loe \rev{q}  \loe  \frac{d}{2}\brk{\half 1 - \rev{r}};\\
\frac{1}{q}+\frac{d}{r}-\frac{d}{2}, \quad 
\rev{q} \loe \frac{d-1}{2}\brk{\half 1 - \rev{r}}.
}
}
\end{prop}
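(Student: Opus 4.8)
The plan is to derive both estimates from the standard dispersive-plus-$TT^*$ machinery, using a Littlewood-Paley decomposition to pass to the Besov formulation and a scaling argument to produce the derivative index. Since $q\geq 2$, Minkowski's inequality lets me exchange the $\ell^2$-sum defining the Besov norm with the $L_t^q$-norm, so that
\[
\norm{e^{it\De}u_0}_{L_t^q B_{r,2}^{s}}^2 \leq \sum_{k} 2^{2ks}\norm{P_k e^{it\De}u_0}_{L_t^q L_x^r}^2, \qquad s=\tfrac{2}{q}+\tfrac{d}{r}-\tfrac{d}{2},
\]
and likewise for the Klein-Gordon flow with $s$ replaced by $\beta(q,r)$. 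By the almost-orthogonality $\sum_k\norm{P_k u_0}_{L_x^2}^2\lesssim\norm{u_0}_{L_x^2}^2$, it therefore suffices to prove the frequency-localized bounds $\norm{P_k e^{it\De}u_0}_{L_t^q L_x^r}\lesssim 2^{-ks}\norm{P_k u_0}_{L_x^2}$ and $\norm{P_k e^{it\jb{D}}u_0}_{L_t^q L_x^r}\lesssim 2^{-k\beta(q,r)}\norm{P_k u_0}_{L_x^2}$.

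For the Schr\"odinger propagator I would first reduce these to a single unit-frequency estimate $\norm{e^{it\De}P_0 v}_{L_t^q L_x^r}\lesssim\norm{v}_{L_x^2}$ valid for every $(q,r)$ in the stated range: parabolic scaling $v\mapsto v(2^{-k}\cdot)$ turns the frequency-$2^k$ estimate into the frequency-$1$ one, and the resulting powers of $2^k$ match $2^{-ks}$ exactly. The unit-frequency estimate in turn follows from the dispersive bound $\norm{e^{it\De}P_0 v}_{L_x^\infty}\lesssim\abs{t}^{-d/2}\norm{v}_{L_x^1}$ together with mass conservation: interpolating gives the decay $\norm{e^{it\De}P_0 v}_{L_x^r}\lesssim\abs{t}^{-d(1/2-1/r)}\norm{v}_{L_x^{r'}}$, and the Keel-Tao $TT^*$ argument yields the sharp pair $1/q_*=\tfrac{d}{2}(1/2-1/r)$ (the excluded endpoint being exactly $(2,\infty,2)$). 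The remaining sub-admissible pairs $q>q_*$ are obtained by interpolating this sharp estimate with the trivial bound $\norm{e^{it\De}P_0 v}_{L_t^\infty L_x^r}\lesssim\norm{v}_{L_x^2}$ (Bernstein at unit frequency), and the case $r=\infty$ in $d=2$ is reached by one further Bernstein step $L_x^\infty\lesssim L_x^{r_1}$.

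The Klein-Gordon estimate follows the same three steps, but the dispersive input now has two regimes reflecting the geometry of the hyperboloid $\jb{\xi}=\sqrt{1+\abs{\xi}^2}$: at frequency $2^k$ the Hessian of the phase has $d-1$ tangential eigenvalues of size $\sim 2^{-k}$ and one radial eigenvalue of size $\sim 2^{-3k}$, so stationary phase gives Schr\"odinger-type decay $\abs{t}^{-d/2}$ for $\abs{t}\lesssim 2^k$ and wave-type decay $\abs{t}^{-(d-1)/2}$ for $\abs{t}\gtrsim 2^k$. Feeding each regime into the Keel-Tao scheme and optimizing produces exactly the two branches of $\beta(q,r)$: the range $1/q\leq\tfrac{d-1}{2}(1/2-1/r)$ is governed by the $\abs{t}^{-(d-1)/2}$ decay and yields $\beta=1/q+d/r-d/2$, while the intermediate range $\tfrac{d-1}{2}(1/2-1/r)\leq 1/q\leq\tfrac{d}{2}(1/2-1/r)$ is governed by the $\abs{t}^{-d/2}$ decay and yields $\beta=-1/q+1/r-1/2$; one checks that the two indices agree on the common boundary $1/q=\tfrac{d-1}{2}(1/2-1/r)$, where both equal $-\tfrac{d+1}{2}(1/2-1/r)$.

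The main obstacle is the Klein-Gordon case: unlike Schr\"odinger, no single scaling reduces all frequencies to a fixed one, so the frequency-localized dispersive estimate must be proved directly from stationary phase with the correct powers of $2^k$ in each regime, and these must be reconciled with the $TT^*$ bookkeeping so that the net derivative count is precisely $\beta(q,r)$ and not something shifted by the lower-order corrections to $\jb{\xi}$. A secondary delicate point is the endpoint: the exclusion $(q,r,d)=(2,\infty,2)$ is the known failure of the $2$D Schr\"odinger endpoint, and at $q=2$ the $TT^*$ step requires the bilinear Keel-Tao interpolation rather than Young or Hardy-Littlewood-Sobolev in time. Away from these points the remaining steps are routine.
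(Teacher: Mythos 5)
The paper offers no proof of this proposition: it is quoted as a classical result with a citation to Keel--Tao, so there is no in-paper argument to compare against. Your outline is the standard proof of that classical result (Littlewood--Paley reduction via Minkowski for $q\geq 2$, frequency-localized dispersive estimates, the $TT^*$ scheme, parabolic rescaling to unit frequency for Schr\"odinger, interpolation with the $L_t^\infty L_x^r$ bound for sub-admissible pairs), and its architecture is sound; the derivative counts you record for both branches of $\beta(q,r)$ are correct and the two branches do match on the boundary $\rev{q}=\frac{d-1}{2}(\half 1-\rev r)$. One factual slip should be corrected: for $e^{it\jb{D}}P_k$ with $k\geq 0$ the two dispersive regimes are the reverse of what you wrote. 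The radial Hessian eigenvalue $\sim\jb{\xi}^{-3}\sim 2^{-3k}$ only yields its stationary-phase gain over the frequency-$2^k$ support once $\abs{t}\gtrsim 2^{k}$, so the kernel obeys the wave-type bound $2^{k(d+1)/2}\abs{t}^{-(d-1)/2}$ for $\abs{t}\lesssim 2^{k}$ and the full bound $2^{k(d+2)/2}\abs{t}^{-d/2}$ for $\abs{t}\gtrsim 2^{k}$, not the other way around. This does not damage the argument, because each of the two bounds remains a valid upper estimate outside the regime where it is sharp, hence each can be fed globally in time into the $TT^*$/Hardy--Littlewood--Sobolev step to produce its branch of $\beta(q,r)$; but as written your heuristic would mislead a reader trying to verify the powers of $2^{k}$. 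You should also add a sentence on low frequencies for Klein--Gordon ($k\leq 0$ behaves like a Schr\"odinger wave packet and is absorbed into the inhomogeneous Besov norm); with these repairs the sketch is complete at the level of detail appropriate for a quoted classical estimate.
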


In the focusing case, we will need some variational results. For the proof see for example \cite{ibrahim2011scattering}. 
\begin{thm}[\cite{ibrahim2011scattering}]
Assume $f(u)=|u|^pu$, $\frac{4}{d}<p<\frac{4}{d-2}$ ($\frac{4}{d}<p<\infty$ for $d=1,2$), and $(\al_1,\be_1)\in\R^2$ satisfies \eqref{range-al-be}. Then
\enu{
		\item
		\EQ{
		m_{\al_1,\be_1} = J(Q_0).
		}
		\item 
		We assume further $\al_1\ne 0$ for $d=2$. If $J(\ph) < m_{\al_1,\be_1}$ and $\kab(\ph)\goe 0$, we have 
		\EQ{
		\kab(\ph)\goe C\min\fbrk{m_{\al_1,\be_1}-J(\ph),\kab^{\text{Q}}(\ph)}.
		}
		\item 
		Define
		\EQ{
		\K_{\al_1,\be_1}^+:=\fbrk{\ph\in H^1: J(\ph)<m_{\al_1,\be_1}, K_{\al_1,\be_1}(\ph)\goe0},
		}
		and
		\EQ{
			\K_{\al_1,\be_1}^-:=\fbrk{\ph\in H^1: J(\ph)<m_{\al_1,\be_1}, K_{\al_1,\be_1}(\ph)<0}.
		}
		We have that $\K^\pm_{\al_1,\be_1}$ is independent of the choice of $(\al_1,\be_1)$, so we denote it by $\K^{\pm}$. Furthermore, $\K^+$ is a connected and open subset of $\fbrk{J(\ph)<m_{\al_1,\be_1}}$, and $0\in\K^+$.
		\item (Free energy equivalence)
		If $\ph\in \K^+$, we have that
		\EQ{
		J(\ph) \loe \half 1 \norm{\ph}_{H^1}^2 \loe \brk{1+\half d}J(\ph).
		}
	}
\end{thm}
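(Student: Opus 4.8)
The whole theorem is driven by the scaling $\ph_\la:=e^{\al_1\la}\ph(e^{-\be_1\la}x)$, for which $\frac{d}{d\la}J(\ph_\la)=\kab(\ph_\la)$. Writing $A=\int|\nabla\ph|^2$, $B=\int|\ph|^2$, $C=\int|\ph|^{p+2}$, one has $j(\la):=J(\ph_\la)=\frac12 e^{\mu_A\la}A+\frac12 e^{\mu_B\la}B-\frac{1}{p+2}e^{\mu_C\la}C$ with $\mu_A=2\al_1+(d-2)\be_1$, $\mu_B=2\al_1+d\be_1$, $\mu_C=(p+2)\al_1+d\be_1$. I focus on the case $d=2$ needed here; then \eqref{range-al-be} with $\al_1>0$ gives $\mu_A,\mu_B\goe0$ and $\mu_C>\mu_A,\mu_B$ (since $\mu_C-\mu_B=p\al_1$ and $\mu_C-\mu_A=p\al_1+2\be_1\goe(p-2)\al_1>0$). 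Hence $j(\la)\to-\I$ as $\la\to+\I$, stays bounded below as $\la\to-\I$, and $j'(\la)=\kab(\ph_\la)$ (after factoring $e^{\mu_C\la}$) is a strictly decreasing bracket, so it changes sign exactly once, at a unique maximiser $\la_*(\ph)$. This yields the min--max description $m_{\al_1,\be_1}=\inf_{\ph\ne0}\sup_\la J(\ph_\la)$, on which everything rests.

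For (a), the bound $m_{\al_1,\be_1}\loe J(Q_0)$ is immediate: $Q_0$ is a free critical point of $J$, so $\kab(Q_0)=0$ and $\sup_\la J((Q_0)_\la)=J(Q_0)$. For the reverse inequality I would produce a minimiser. On $\fbrk{\kab=0}$ one has $J=J-\frac{1}{\mu_C}\kab=\frac12(1-\frac{\mu_A}{\mu_C})A+\frac12(1-\frac{\mu_B}{\mu_C})B$, a positive-definite quadratic form, so minimising sequences are $H^1$-bounded; passing to symmetric-decreasing rearrangements (and rescaling back onto the constraint) and using the compact embedding $H^1_{\mathrm{rad}}(\R^2)\hookrightarrow L^{p+2}$ recovers compactness and yields a minimiser $\ph_*$. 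Pairing its Euler--Lagrange identity $J'(\ph_*)=\eta\,\kab'(\ph_*)$ with the scaling generator and using $j''(0)\ne0$ forces $\eta=0$, so $\ph_*$ solves $-\De\ph_*+\ph_*=|\ph_*|^p\ph_*$; by the least-energy characterisation $J(\ph_*)=J(Q_0)$. As this value is $J(Q_0)$ for every admissible $(\al_1,\be_1)$, independence follows (the degenerate boundary $\al_1=0$ is reached by a limiting argument).

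The core is (b), where I expect the main difficulty. I would split into two regimes. If $\kab^{\text{Q}}(\ph)$ is small then, since $\kab=\kab^{\text{Q}}-c_1C$ with $c_1=\mu_C/(p+2)>0$, $\kab^{\text{Q}}\approx\norm{\ph}_{H^1}^2$, and $C\lsm\norm{\ph}_{H^1}^{p+2}\lsm(\kab^{\text{Q}})^{(p+2)/2}$ by Gagliardo--Nirenberg, the nonlinear term is negligible and $\kab\goe\frac12\kab^{\text{Q}}$. Otherwise the nonlinear term is comparable to the quadratic one, which forces $\norm{\ph}_{H^1}\gsm1$ and, crucially, bounds the maximiser $\la_*(\ph)>0$ from above. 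Then $m_{\al_1,\be_1}-J(\ph)\loe J(\ph_{\la_*})-J(\ph)=\int_0^{\la_*}\kab(\ph_\la)\,d\la$, and the inequality $j''\loe\mu_C j'$ on $[0,\la_*]$ --- which follows from $\mu_C j'-j''=\frac{\mu_A(\mu_C-\mu_A)}{2}e^{\mu_A\la}A+\frac{\mu_B(\mu_C-\mu_B)}{2}e^{\mu_B\la}B\goe0$ --- yields $\kab(\ph_\la)\loe e^{\mu_C\la}\kab(\ph)$; integrating over the bounded interval gives $m_{\al_1,\be_1}-J(\ph)\loe C\kab(\ph)$. Combining the two regimes gives the claim, and the uniform upper bound on $\la_*$ in the second regime is the delicate point.

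Finally, (c) and (d) follow quickly. With $m_{\al_1,\be_1}$ parameter-independent by (a), the monotonicity of $J$ along each scaling orbit shows that for $J(\ph)<m_{\al_1,\be_1}$ the sign of $\kab(\ph)$ --- equivalently, the sign of $\la_*(\ph)$ --- is the same for all admissible $(\al_1,\be_1)$, so $\K^\pm$ are well defined; openness follows from continuity, $0\in\K^+$ since $J(0)=0<m_{\al_1,\be_1}$ and $\kab(0)=0$, and connectedness by flowing each $\ph\in\K^+$ to $0$ along $\ph_\la$ (the path stays in $\K^+$). For (d), $F\goe0$ gives $J(\ph)\loe\frac12\norm{\ph}_{H^1}^2$ at once; testing the parameter-free sign $\kab\goe0$ with the Nehari choice $\be_1=0$ gives $C\loe\norm{\ph}_{H^1}^2$, whence $J(\ph)\goe\frac{p}{2(p+2)}\norm{\ph}_{H^1}^2\goe\frac{1}{2+d}\norm{\ph}_{H^1}^2$, i.e. $\frac12\norm{\ph}_{H^1}^2\loe(1+\frac{d}{2})J(\ph)$.
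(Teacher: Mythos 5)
The paper does not prove this theorem: it is imported from Ibrahim--Masmoudi--Nakanishi \cite{ibrahim2011scattering} ("for the proof see \cite{ibrahim2011scattering}"), so there is no internal proof to compare against. Your reconstruction follows the same route as that reference --- the scaling derivative $j'(\la)=\kab(\ph_\la)$, the single sign change of the decreasing bracket, the min--max characterisation of $m_{\al_1,\be_1}$, and the differential inequality $\mu_C j'-j''\goe 0$ --- and the architecture is sound. Three repairs are needed. (i) The step you flag as delicate in (b), the upper bound on $\la_*$, is indeed the crux, but it closes directly from the stationarity equation: dividing $j'(\la_*)=0$ by $e^{\mu_C\la_*}$ gives $\frac{\mu_C}{p+2}\norm{\ph}_{p+2}^{p+2}\loe e^{-\gamma\la_*}\kab^{\text{Q}}(\ph)$ with $\gamma=\min\fbrk{\mu_C-\mu_A,\mu_C-\mu_B}\goe(p-2)\al_1>0$, so in the regime where the nonlinear term is at least $\tfrac12\kab^{\text{Q}}(\ph)$ one gets $e^{\gamma\la_*}\loe 2$; without this the argument is incomplete. (ii) Your claim $\kab^{\text{Q}}(\ph)\approx\norm{\ph}_{H^1}^2$ fails for degenerate pairs such as $\be_1=-\al_1$, where $\mu_B=0$ and $\kab^{\text{Q}}$ controls only the gradient; since the $d=2$ Gagliardo--Nirenberg inequality reads $\norm{\ph}_{p+2}^{p+2}\lsm\norm{\nabla\ph}_2^{p}\norm{\ph}_2^{2}$, you must first secure $\norm{\ph}_2^2\lsm m_{\al_1,\be_1}$ from part (d), which forces the logical order (a), (c), (d), (b) rather than the order in which you argue. (iii) In (c), "the sign of $\la_*(\ph)$ is the same for all admissible pairs" is not an argument, because $\la_*$ depends on the pair; the correct mechanism is that, by (a), no $\kab$ can vanish on $\fbrk{J<m_{\al_1,\be_1}}\setminus\fbrk{0}$ (a zero there would be an admissible competitor with $J<m$), so $K_{\al_1',\be_1'}$ keeps a constant sign along your connecting path to $0$, and that sign is positive near the origin. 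With these points filled in, the proposal is a faithful sketch of the cited proof.
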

Among all the functionals $\kab$, the virial functional $K_{d/2,-1}$ is useful for scattering. Define $G(u)=\Re\brk{\wb{u}f(u)-F(u)}$. By simple computation, 
\EQ{
K_{d/2,-1}(u) = \norm{\nabla u}_2^2 -\half d \int G(u) \dx = \norm{\nabla u}_2^2 - \la \frac{dp}{2(p+2)} \int |u|^{p+2} \dx.
}
We denote it by $K(u)$. 
Combining with the local well-posedness, this theorem yields global well-posedness and lower bound of virial functional for focusing problems. Now we state the variational result for power type focusing NLS and NLKG:
\begin{thm}[\cite{AN10Kyoto}]
	We assume that $\la=1$, and $(\al_1,\be_1)\in\R^2$ satisfies \eqref{range-al-be}. Let $4/d<p$ if $d=1$ or $d=2$, and $4/d<p<4/(d-2)$ if $d\goe 3$. If $\es(u_0) + M(u_0)/2<m_{\al_1,\be_1}$ and $\kab(u_0)\goe 0$, the solution of \eqref{equ-main-nls} exists globally, i.e. $u(t,x)\in C\brk{\R:H^1(\R^d)}$. Particularly, 
	\EQ{
	K(u(t)) \goe C\norm{\nabla u(t)}_2^2,
	}
	for all $t\in \R$.
\end{thm}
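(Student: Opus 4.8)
\emph{The plan} is to run a standard energy-trapping argument resting entirely on the variational theorem (\cite{ibrahim2011scattering}) recalled above, whose hypotheses on $p$ coincide with ours. First I would record that the quantity controlled by the hypothesis is precisely the static energy at $c=1$: since
\EQ{
J(\ph)=\es(\ph)+\half 1 M(\ph),
}
the assumption $\es(u_0)+M(u_0)/2<m_{\al_1,\be_1}$ is just $J(u_0)<m_{\al_1,\be_1}$, and conservation of both $\es$ and $M$ along the NLS flow gives $J(u(t))=J(u_0)<m_{\al_1,\be_1}$ for every $t$ in the maximal interval of existence. Combined with $\kab(u_0)>0$ (which forces $u_0\ne0$), this places $u_0$ in the stable set $\K^+$.

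Next I would establish invariance of $\K^+$ and deduce global existence. By local well-posedness $u$ is continuous into $H^1$, so $t\mapsto\kab(u(t))$ is continuous and starts positive. If it vanished, let $t_*$ be the first such time; then $u(t_*)\ne0$ is impossible, since a nonzero zero of $\kab$ has $J\goe m_{\al_1,\be_1}$ by definition of $m_{\al_1,\be_1}$, contradicting $J(u(t_*))=J(u_0)<m_{\al_1,\be_1}$, while $u(t_*)=0$ is ruled out by uniqueness (it would force $u_0=0$). Hence $u(t)\in\K^+$ for all $t$. The free-energy equivalence (part (iv)) then yields the uniform bound
\EQ{
\half 1 \norm{u(t)}_{H^1}^2 \loe \brk{1+\half d}J(u(t)) = \brk{1+\half d}J(u_0),
}
and the blow-up alternative upgrades this to global existence, $u\in C\brk{\R:H^1}$.

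For the coercivity statement I would specialize to the virial weight $(\al_1,\be_1)=(d/2,-1)$, which satisfies \eqref{range-al-be} (one checks $2\al_1+d\be_1=0$ and $2\al_1+(d-2)\be_1=2$) and has $\al_1=d/2\ne0$, so part (ii) applies and $K=K_{d/2,-1}$. A one-line computation gives $K^{\text{Q}}_{d/2,-1}(\ph)=\norm{\nabla\ph}_2^2$. Since $\K^\pm$ is independent of the weight (part (iii)), the trapping $u(t)\in\K^+$ also means $K(u(t))\goe 0$, in fact $K(u(t))>0$ because $u(t)\ne0$ and $J(u(t))<m_{\al_1,\be_1}$; hence part (ii) gives
\EQ{
K(u(t))\goe C\min\fbrk{m_{\al_1,\be_1}-J(u(t)),\ \norm{\nabla u(t)}_2^2}.
}
The first entry equals the fixed positive number $m_{\al_1,\be_1}-J(u_0)$, and the second is bounded above uniformly by the $H^1$ bound; comparing the two regimes $\norm{\nabla u(t)}_2^2\loe m_{\al_1,\be_1}-J(u_0)$ and $\norm{\nabla u(t)}_2^2> m_{\al_1,\be_1}-J(u_0)$ converts the minimum into $K(u(t))\goe C'\norm{\nabla u(t)}_2^2$, with $C'$ depending only on $m_{\al_1,\be_1}-J(u_0)$ and the energy bound.

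\emph{The hard part} is really the invariance in the second step, which is where all the content sits: it relies on $\fbrk{\kab=0,\ \ph\ne0}$ being cut off at the level $m_{\al_1,\be_1}$ and, crucially, on the independence of $\K^\pm$ from $(\al_1,\be_1)$, which is what lets the single sign hypothesis $\kab(u_0)>0$ propagate to positivity of the virial functional $K_{d/2,-1}$ that drives the scattering analysis. Once the flow is trapped in $\K^+$, the uniform bound, global existence, and coercivity are routine, the only genuine computation being the identity $K^{\text{Q}}_{d/2,-1}=\norm{\nabla\cdot}_2^2$ and the elementary treatment of the minimum.
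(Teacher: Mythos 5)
Your argument is correct and coincides with the standard Payne--Sattinger trapping argument that the paper itself carries out verbatim for the exponential analogue of this theorem (the power-type statement being only cited to \cite{AN10Kyoto}): conservation traps $u(t)$ in $\K^+$, the free-energy equivalence gives the uniform $H^1$ bound and hence global existence, and the variational estimate applied with $(\al_1,\be_1)=(d/2,-1)$, for which $K^{\text{Q}}=\norm{\nabla\cdot}_2^2$, yields the coercivity after comparing the two regimes of the minimum. The only cosmetic difference is at a putative vanishing time $t_*$: you exclude $u(t_*)=0$ via uniqueness of the flow, whereas the paper notes $0\in\K^+$ and invokes openness and connectedness of $\K^+$; both routes are valid.
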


\begin{thm}[\cite{ibrahim2011scattering}]
	We assume that $\la=1$, and $(\al_1,\be_1)\in\R^2$ satisfies \eqref{range-al-be}. Let $4/d<p$ if $d=1$ or $d=2$, and $4/d<p<4/(d-2)$ if $d\goe 3$. If  $\ek(u_0,u_1)<m_{\al_1,\be_1}$ and $\kab(u_0)\goe 0$, the solution of \eqref{equ-main-kg} exists globally, i.e. $u(t,x)\in C\brk{\R:H^1(\R^d)}\cap C^1\brk{\R:L^2(\R^d)}$. Particularly, 
	\EQ{
		K(u(t)) \goe C\norm{\nabla u(t)}_2^2,
	}
	for all $t\in \R$.
\end{thm}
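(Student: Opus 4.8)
The plan is to combine conservation of the Klein--Gordon energy with the variational structure supplied by the preceding theorem of \cite{ibrahim2011scattering}, running the standard invariant-set and continuity argument. First I would record the splitting $\ek(u,\ut) = \half 1 \norm{\ut}_2^2 + J(u)$, which is immediate from the definitions of $\ek$ and the static energy $J=J^{(1)}$. Since the kinetic term is nonnegative and $\ek$ is conserved, this gives $J(u(t)) \loe \ek(u_0,u_1) < m_{\al_1,\be_1}$ for every $t$ in the maximal existence interval. In particular the energy gap $m_{\al_1,\be_1} - J(u(t)) \goe m_{\al_1,\be_1} - \ek(u_0,u_1) =: \delta > 0$ is bounded below uniformly in time.

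Next I would show the flow is trapped in $\K^+$. The hypothesis $\kab(u_0)>0$ together with $J(u_0)<m_{\al_1,\be_1}$ places $u_0\in\K^+$. Since $u(\cdot)$ is continuous into $H^1$ on its existence interval and $\K^+$ is open (part (c) of the variational theorem), the set of times with $u(t)\in\K^+$ is relatively open; it is also relatively closed, because a boundary time would satisfy $J(u(t))<m_{\al_1,\be_1}$ with $\kab(u(t))=0$, forcing, if $u(t)\ne0$, the bound $J(u(t))\goe m_{\al_1,\be_1}$ from the definition of $m_{\al_1,\be_1}$ as an infimum over $\fbrk{\kab=0}$, a contradiction (and in the degenerate case $u(t)=0$ the target inequality is trivial). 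Hence $u(t)\in\K^+$ on the whole interval, and by the independence of $\K^+$ from $(\al_1,\be_1)$ I may read off $K(u(t))=K_{d/2,-1}(u(t))\goe0$, with strict positivity whenever $u(t)\ne0$.

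Global existence then follows from the free-energy equivalence (part (d)): on $\K^+$ one has $\norm{u(t)}_{H^1}^2 \loe \brk{2+d}J(u(t)) < \brk{2+d}m_{\al_1,\be_1}$, while $\norm{\ut(t)}_2^2\loe 2\ek$, so $(u,\ut)$ stays bounded in $H^1\times L^2$ uniformly in $t$, ruling out finite-time blow-up via the local theory. For the stated lower bound I would apply the coercivity estimate (part (b)) to $K=K_{d/2,-1}$, which satisfies \eqref{range-al-be} with $\al_1=d/2\ne0$ and whose quadratic part is exactly $\norm{\nabla u}_2^2$, obtaining $K(u(t))\goe C\min\fbrk{m_{\al_1,\be_1}-J(u(t)),\ \norm{\nabla u(t)}_2^2}$. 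When the minimum is $\norm{\nabla u(t)}_2^2$ the claim is immediate; when it is the gap, I combine $m_{\al_1,\be_1}-J(u(t))\goe\delta$ with the uniform bound $\norm{\nabla u(t)}_2^2\loe\brk{2+d}m_{\al_1,\be_1}$ from part (d) to convert the constant gap into a multiple of $\norm{\nabla u(t)}_2^2$, yielding $K(u(t))\goe C'\norm{\nabla u(t)}_2^2$.

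The main obstacle is the intertwining of the trapping and the global-existence steps: membership in $\K^+$ is what supplies the a priori $H^1$ bound preventing blow-up, yet the trapping argument only operates along an already-existing solution. The clean way around this is precisely the relatively-open-and-closed continuity argument above, run on the maximal interval, so that the uniform bound is available at every existence time before the blow-up criterion is invoked. Everything else reduces to bookkeeping with the quoted variational inequalities.
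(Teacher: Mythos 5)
Your argument is correct and is essentially the argument the paper relies on: the paper itself only cites \cite{ibrahim2011scattering} for this theorem, but the proof it writes out for the analogous exponential NLS statement is exactly your scheme --- conservation gives $J(u(t))\loe \ek<m_{\al_1,\be_1}$, the openness of $\K^+$ together with the fact that $\kab(\ph)=0$ and $J(\ph)<m_{\al_1,\be_1}$ force $\ph=0$ traps the flow in $\K^+$, free-energy equivalence gives the uniform $H^1\times L^2$ bound and hence global existence, and the coercivity estimate for $K=K_{d/2,-1}$ (whose quadratic part is $\|\nabla u\|_2^2$) combined with the uniform gap and the uniform gradient bound yields $K(u(t))\gsm\|\nabla u(t)\|_2^2$. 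No gaps; the only cosmetic point is that closedness of $\{t:u(t)\in\K^+\}$ already follows from continuity of $t\mapsto \kab(u(t))$, so your boundary-time discussion is slightly more elaborate than needed.
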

\subsection{Virial-Morawetz estimates}
In this section, we prove the virial-Morawetz inequality in both defocusing and focusing cases. 
Recall the definition \EQn{
G(u)=\Re\brk{\wb{u}f(u)-F(u)}.
}
We need the Morawetz identity for NLS and NLKG:
\begin{lem}[\cite{nakanishi2001remarks}]\label{lem-morawetz-identity}
	Suppose that $h(x):\R^d\ra\R^d$,  $q(x):\R^d\ra\R$, and $u(t,x):\R\times\R^d\ra \C$ are smooth. Let $h_j(x)$ be the $j$-th coordinate of the vector-valued function $h(x)$. We have that
	\EQ{
	& \Re\int\brk{i\pdt u -\De u - f(u)}\brk{h\cdot\nabla \wb{u} + q\wb{u}}\dx \\
	= & -\pdt\fbrk{\half 1 \Im \int u h\cdot\nabla \wb{u} \dx} + \sum_{k,j=1}^{d} \Re\int\pdk u \pdk h_j \pdj\wb{u}\dx -\half 1 \int\De q \abs{u}^2\dx \\
	&  -\half 1\int\dive h G(u)\dx + \Re\int\brk{q-\half 1\dive h}\brk{i\ut\wb{u}+\abs{\nabla u}^2-F(u)}\dx.
	}
	and
	\EQ{
	& \Re\int\brk{\pdt^2 u -\De u +u - f(u)}\brk{h\cdot\nabla \wb{u} + q\wb{u}}\dx \\
	= & -\pdt\fbrk{\half 1 \Im \int \ut \brk{h\cdot\nabla \wb{u} + q\wb{u}} \dx} + \sum_{k,j=1}^{d} \Re\int\pdk u \pdk h_j \pdj\wb{u}\dx -\half 1 \int\De q \abs{u}^2\dx \\
	&  -\frac{1}{2}\int\dive h G(u)\dx + \Re\int\brk{q-\half 1\dive h}\brk{ - \abs{\ut}^2 + \abs{u}^2 + \abs{\nabla u}^2-F(u)}\dx,
	}
\end{lem}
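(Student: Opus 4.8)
The plan is to prove both identities by the multiplier method at a fixed time $t$, pairing the equation residual with the conjugate multiplier $\wb{\M u}:=h\cdot\nabla\wb u+q\wb u$, integrating over $\R^d$, and taking real parts. Concretely, one expands the quantity $\Re\int(\cdots)(h\cdot\nabla\wb u+q\wb u)\dx$ term by term and integrates by parts, discarding all spatial boundary terms (legitimate since $u$ is smooth, under sufficient spatial decay, or by a standard cutoff-and-limit argument). The two statements share exactly the same elliptic and nonlinear contributions; the Klein--Gordon identity differs only in that $\pdt^2 u$ replaces $i\pdt u$ and that the extra zeroth-order term $+u$ is present. I would establish the Schr\"odinger identity in full and then record the two modifications needed for Klein--Gordon.

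The elliptic term is common to both equations. Integrating by parts once gives $-\Re\int\De u\,\wb{\M u}\dx=\Re\int\nabla u\cdot\nabla(h\cdot\nabla\wb u+q\wb u)\dx$. Expanding $\pdk(h_j\pdj\wb u)=(\pdk h_j)\pdj\wb u+h_j\pdk\pdj\wb u$ and using $\Re(\pdk u\,\pdj\pdk\wb u)=\half1\pdj|\pdk u|^2$, the $h$-pairing produces the Hessian-type term $\sum_{k,j}\Re\int\pdk u\,\pdk h_j\pdj\wb u\dx$ together with $-\half1\int\dive h\,|\nabla u|^2\dx$ after one integration by parts in $j$; the $q$-pairing gives $\int q|\nabla u|^2\dx$ and, via $\Re(\wb u\nabla u)=\half1\nabla|u|^2$ and a further integration by parts, the term $-\half1\int\De q\,|u|^2\dx$. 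The two gradient contributions combine into $\int(q-\half1\dive h)|\nabla u|^2\dx$. For the nonlinearity I would exploit that $F$ is real with $\pd_{\wb u}F=f$, so $\pdj F(u)=2\Re(f(u)\pdj\wb u)$ and hence $\Re(f(u)\,h\cdot\nabla\wb u)=\half1 h\cdot\nabla F(u)$; integrating by parts turns the $h$-pairing into a multiple of $\int\dive h\,F(u)\dx$, while the $q$-pairing is $-\int q\,\Re(\wb u f(u))\dx$. These are then regrouped through the defining relation $\Re(\wb u f)=F+G$ to produce the stated nonlinear terms, namely the $-\half1\int\dive h\,G(u)\dx$ contribution and the $-F(u)$ entry of the $(q-\half1\dive h)$ bracket.

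It remains to treat the time term. For NLS one computes $\Re\int i\pdt u\,(h\cdot\nabla\wb u+q\wb u)\dx$; using $\Re(iz)=-\Im z$ and the product rule, I would extract the exact time derivative $-\pdt\{\half1\Im\int u\,h\cdot\nabla\wb u\dx\}$, with the remainder reassembling---after moving $\nabla$ off $\pdt\wb u$---into $\int(q-\half1\dive h)\Re(i\pdt u\,\wb u)\dx$. For NLKG one writes $\pdt^2u\,\wb{\M u}=\pdt(\pdt u\,\wb{\M u})-\pdt u\,(h\cdot\nabla\pdt\wb u+q\pdt\wb u)$, using that $h,q$ are time-independent, and performs the same two spatial integrations by parts; this isolates the time-derivative boundary term recorded on the right-hand side and leaves $-\int(q-\half1\dive h)|\pdt u|^2\dx$. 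Finally the extra term $+u$ of NLKG contributes $\Re\int u\,(h\cdot\nabla\wb u+q\wb u)\dx$, which collapses by the same manipulations into $\int(q-\half1\dive h)|u|^2\dx$, supplying the $+|u|^2$ entry of the Klein--Gordon bracket. Summing the four families of terms yields the two identities.

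The integration by parts itself is routine; the genuine difficulty is the bookkeeping---organizing the contributions so that every weight collapses exactly into $q-\half1\dive h$ and so that the nonlinear terms split cleanly into the divergence-of-$h$ piece carrying $G$ and the $q$-weighted piece carrying $F$. This hinges on the two pointwise identities $\pdj F=2\Re(f\pdj\wb u)$ and $\Re(\wb u f)=F+G$, and on keeping the sign and $\Re/\Im$ conventions consistent between the first-order (NLS) and second-order (NLKG) time terms, which is where I expect to have to be most careful. The one analytic, as opposed to algebraic, point is the vanishing of the spatial boundary terms, which the smoothness-and-decay hypothesis (or truncation) takes care of.
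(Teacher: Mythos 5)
The paper offers no proof of this lemma at all --- it is quoted directly from Nakanishi \cite{nakanishi2001remarks} --- so the only thing to compare against is the standard multiplier computation, which is exactly what you propose and is the right approach. Your handling of the elliptic block, of the NLS time term, and of the two pointwise identities $\pdj F(u)=2\Re\brk{f(u)\pdj\wb u}$ and $\Re\brk{\wb u f(u)}=F(u)+G(u)$ is correct as far as it goes.

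There is, however, a concrete gap at the point where you assert that the contributions ``collapse exactly'' into the printed right-hand sides: they do not, for general $q$, because the identities as printed are only valid for the particular choice $q=\frac12\dive h$. Carrying out your own regrouping, the nonlinear contribution is $\frac12\int \dive h\, F(u)\dx-\int q\,\Re\brk{\wb u f(u)}\dx=-\frac12\int\dive h\,G(u)\dx-\int\brk{q-\frac12\dive h}\Re\brk{\wb u f(u)}\dx$, so the entry $-F(u)$ inside the $\brk{q-\frac12\dive h}$ bracket should read $-\Re\brk{\wb u f(u)}$; the discrepancy with the stated formula is precisely $-\int\brk{q-\frac12\dive h}G(u)\dx$, which vanishes only when $q=\frac12\dive h$ (take $q=0$ to see the stated identity fail). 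Likewise, for Klein--Gordon the exact time derivative your computation isolates is $\pdt\,\Re\int \ut\brk{h\cdot\nabla\wb u+q\wb u}\dx$, i.e.\ the Morawetz quantity must be $-\Re\int\ut\brk{h\cdot\nabla\wb u+q\wb u}\dx$ rather than $\frac12\Im\int\ut\brk{h\cdot\nabla\wb u+q\wb u}\dx$; the latter even vanishes identically for real-valued $u$, so it cannot be the correct density. Neither issue harms the paper, which only ever applies the lemma with $q=\dive h/2$ (killing the bracket) and only needs $\abs{M(t)}\lsm R$, but your proof as described cannot terminate in the identities as literally stated for general $q$: you should either restrict the statement to $q=\frac12\dive h$ or correct these two terms before claiming the bookkeeping closes.
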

Next, we derive the key virial-Morawetz estimate following Dodson-Murphy \cite{dodson2017new-radial}. From now on, the constants may depend on the energy $\es$ or $\ek$ and mass $M(u)$, namely $C=C(\ek(u_0,u_1))$ or $C(\es(u_0),M(u_0))$.
\begin{prop}\label{prop-vi-mora-power}
	Assume $d\goe2$, $(\al_1,\be_1)\in\eqref{range-al-be}$, $f(u) = \la |u|^p u$ with $\la=\pm 1$,  $4/d<p<4/(d-2)$, $(u_0,u_1)\in H^1\times L^2$, radial. If $\la=1$, we assume further
	\EQ{
	\ncase{
	& \es(u_0) + M(u_0)/2 <m_{\al_1,\be_1},
	& \text{NLS case,} \\
	& \ek(u_0,u_1) <m_{\al_1,\be_1},
	& \text{NLKG case,} \\
}
}
	and $K(u_0)\goe 0$. Let  $u\in C\brk{I:H^1}$ be the global radial solution of \eqref{equ-main-nls} or \eqref{equ-main-kg}.
	
    Then, we have for any $R>0$ and $T_2>T_1>0$, 
	\EQn{
		\int_{T_1}^{T_2}\int \aabs{G(u)}\dx\dt \lsm 
		R + \brk{T_2-T_1}R^{-\min\fbrk{(d-1)p/2,2}}.
	}
	Furthermore, for any $\de>0$ and $T>0$, we have
	\EQn{\label{esti-vi-mora-power}
		\int_T^\I t^{-\max\fbrk{\frac{2}{2+(d-1)p},\rev3}-\de}\int \aabs{G(u)}\dx\dt \lsm T^{-\de},
	}
\end{prop}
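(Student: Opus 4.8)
The plan is to run the Morawetz identity of Lemma~\ref{lem-morawetz-identity} with a carefully chosen, spatially truncated multiplier $(h,q)$ that interpolates between the virial weight $h(x)=x$ (giving the favorable sign on $\int G(u)$ in the focusing case via $K(u)$) on the ball $|x|\loe R$ and the bounded Morawetz weight on $|x|\gsm R$. Concretely, I would take $h(x)=\nabla\psi_R(x)$ with $\psi_R(x)=R^2\psi(x/R)$ for a radial $\psi$ that equals $|x|^2/2$ near the origin and grows linearly at infinity, and set $q=\tfrac12\dive h$ so that the last term in each identity drops out. Integrating the identity in $t$ over $[T_1,T_2]$, the time derivative collapses to boundary terms $\tfrac12\Im\int u\,h\cdot\nabla\wb u\,dx$ (NLS) or $\tfrac12\Im\int\ut(h\cdot\nabla\wb u+q\wb u)\,dx$ (NLKG). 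Since $|h|\lsm R$ and the solution lies in $C(I:H^1)$ with energy-mass bounded by hypothesis, these boundary terms are $O(R)$ uniformly in time, which is the source of the $R$ on the right-hand side.

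Next I would isolate the good term. On the inner region the Hessian of $\psi_R$ is the identity, so $\sum_{k,j}\Re\int\pdk u\,\pdk h_j\,\pdj\wb u-\tfrac12\int\dive h\,G(u)$ reproduces exactly the virial functional and yields $\gsm\int_{|x|\loe R}|G(u)|\,dx$; in the defocusing case $G(u)\goe0$ pointwise, and in the focusing case I invoke Theorem~\ref{thm-an} / the coercivity $K(u(t))\goe C\norm{\nabla u(t)}_2^2$ to guarantee the inner contribution is nonnegative and controls $\int_{|x|\loe R}|G(u)|$. The error terms live entirely in the annulus $|x|\sim R$ and involve $\De q=\tfrac12\De\dive h$ together with the mismatch $\dive h-d$ of the divergence from its flat value; schematically these are bounded by $R^{-2}\int_{|x|\gsm R}|u|^2\,dx$ from the $\De q$ term and by $\int_{|x|\gsm R}|G(u)|$ from the divergence defect. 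The crucial point, and the step I expect to be the main obstacle, is bounding the tail $\int_{|x|\gsm R}|G(u)|\,dx=\int_{|x|\gsm R}|u|^{p+2}\,dx$: here I would use the radial Strauss-type decay estimate $|u(x)|\lsm|x|^{-(d-1)/2}\norm{u}_{H^1}$, which converts the $L^{p+2}$ tail into a gain of $R^{-(d-1)p/2}$ (the exponent $(d-1)p/2$ in the statement), while the mass tail from $\De q$ gives the competing $R^{-2}$; taking the smaller exponent yields the $R^{-\min\{(d-1)p/2,\,2\}}$ factor, and summing the per-unit-time error over $[T_1,T_2]$ produces the $(T_2-T_1)R^{-\min\{(d-1)p/2,2\}}$ term. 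Rearranging gives the first displayed inequality.

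For the weighted bound \eqref{esti-vi-mora-power} I would optimize $R$ in $t$. On a dyadic time window $t\sim S$ the first inequality reads $\int_S^{2S}\int|G(u)|\lsm R+SR^{-a}$ with $a=\min\{(d-1)p/2,2\}$; balancing the two terms by choosing $R\sim S^{1/(1+a)}$ gives $\int_S^{2S}\int|G(u)|\lsm S^{1/(1+a)}$. Observe $\tfrac1{1+a}=\max\{\tfrac{2}{2+(d-1)p},\tfrac13\}$ exactly, since $a=\tfrac{(d-1)p}2$ when $(d-1)p\loe4$ and $a=2$ otherwise. Therefore, writing the weight exponent as $b=\tfrac1{1+a}+\de$, I would multiply by $t^{-b}$ and sum the dyadic pieces: $\int_T^\I t^{-b}\int|G(u)|\,dt\lsm\sum_{S=2^kT}S^{-b}\cdot S^{1/(1+a)}=\sum_k(2^kT)^{-\de}\lsm T^{-\de}$, the geometric series converging precisely because of the extra $\de$. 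The only delicacy is the restriction $\al_1\ne0$ needed in the focusing coercivity (Theorem~\ref{thm-var}(b)); I would note that $K=K_{d/2,-1}$ has $\al_1=d/2\ne0$, so the virial multiplier is admissible and the argument closes in all three cases.
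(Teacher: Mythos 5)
Your multiplier choice, the treatment of the boundary and error terms, the radial-Sobolev tail bounds producing $R^{-\min\{(d-1)p/2,2\}}$, and the dyadic optimization $R\sim S^{1/(1+a)}$ with $\frac1{1+a}=\max\{\frac{2}{2+(d-1)p},\frac13\}$ followed by summation all match the paper's argument; the defocusing case and the passage from the first display to the second are fine (modulo a harmless sign slip: $G(u)=\la\frac{p}{p+2}|u|^{p+2}$ is nonpositive when $\la=-1$, and it is $-\frac d2G(u)$ that carries the favorable sign).

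The genuine gap is in the focusing case, at precisely the step you describe as ``invoke the coercivity $K(u(t))\goe C\norm{\nabla u(t)}_2^2$.'' After truncation the main term is not $K(u(t))$ but, up to $O(R^{-\min\{2,(d-1)p/2\}})$ commutator errors, the virial functional of the truncated function $K(\chir u(t))$, and what you need is $K(\chir u(t))\gsm\int\aabs{G(\chir u(t))}\dx$ uniformly in $t$. Positivity and coercivity of the global functional do not transfer to its localization: writing $K(\chir u)=K(u)-\int(1-\chir^2)\abs{\nabla u}^2\dx+O(R^{-\cdots})$, the subtracted exterior gradient term can exceed the lower bound $C\norm{\nabla u}_2^2$ when $C<1$, so even the sign of the localized quantity is in question. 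The paper closes this by running the variational machinery on $\chir u(t)$ itself: one shows $J(\chir u(t))\loe J(u(t))+CR^{-\min\{2,(d-1)p/2\}}<m_{\al_1,\be_1}$ for all $R>R_0$ with $R_0$ depending only on energy and mass, uses that $\K^+$ is open and connected together with the continuity of $R\mapsto\chir u(t)$ (whose limit as $R\to\infty$ is $u(t)\in\K^+$) to conclude $K(\chir u(t))>0$, and then applies the quantitative estimate $\kab(\ph)\goe C\min\fbrk{m_{\al_1,\be_1}-J(\ph),\kab^{\text{Q}}(\ph)}$ to $\ph=\chir u(t)$, splitting into the cases $K^{\text{Q}}(\chir u)\goe C$ and $K^{\text{Q}}(\chir u)\loe C$ to arrive at $K(\chir u)\gsm\int G(\chir u)\dx$ uniformly in $t$ and $R>R_0$; the range $0<R\loe R_0$ is then absorbed trivially since $R_0$ depends only on conserved quantities. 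Without some version of this localized variational step, your focusing-case estimate does not close.
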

\begin{proof}
We take $\chi(r)\in C_0^\I([0,+\I))$ such that $\chi(r)=1$ if $0\leq r \leq 1$ and $\chi(r)=0$ if $r\geq 2$. Let $\chir(r)=\chi(R^{-1}r)$ and
\EQ{
	\ph(r) = \int_0^r \chir^2(s)\ds.
}
Let $h(x)=\ph(\abs{x})x/\abs{x}$ and $q(x) = \dive h(x)/2$. After simple computation, we have
\EQ{
& \pdk h_j = \de_{kj} \frac{\ph(\abs{x})}{\abs{x}} - \frac{x_j x_k}{\abs{x}^2}\brk{\frac{\ph(\abs{x})}{\abs{x}} -\ph'(\abs{x})}, \\
& \dive h = \ph'(\abs{x}) + (d-1) \frac{\ph(\abs{x})}{\abs{x}},\\
& q(x) = \half d \ph'(\abs{x}) + \half{d-1}\brk{\frac{\ph(\abs{x})}{\abs{x}} -\ph'(\abs{x})}, \\
& \pdk \dive h = \frac{x_k}{\abs{x}}\ph''(\abs{x}) -  (d-1)\frac{x_k}{\abs{x}^2} \brk{\frac{\ph(\abs{x})}{\abs{x}} -\ph'(\abs{x})},
}
and
\EQ{
	\De \dive h = \ph'''(\abs{x}) + \frac{2(d-1)}{\abs{x}}\ph''(\abs{x}) - \frac{(d-1)(d-3)}{\abs{x}^2}\brk{\frac{\ph(\abs{x})}{\abs{x}} -\ph'(\abs{x})}.
}
Let $M(t)$ denote the Morawetz quantity:
\EQ{
	M(t) = 
	\ncase{
		& \half 1 \Im \int u h\cdot\nabla \wb{u} \dx,
		& \text{NLS case,} \\
		& \half 1 \Im \int \ut \brk{h\cdot\nabla \wb{u} + q\wb{u}} \dx,
		& \text{NLKG case.} \\
	}
}
It is easy to see that $M(t)\lsm R$.
Using Morawetz identity in Lemma \ref{lem-morawetz-identity}, for both NLS and NLKG, we have that
\EQ{
0 = & -\pdt M(t) + \sum_{k,j=1}^{d} \Re\int\brk{\de_{kj} \frac{\ph(\abs{x})}{\abs{x}} - \frac{x_j x_k}{\abs{x}^2}\brk{\frac{\ph(\abs{x})}{\abs{x}} -\ph'(\abs{x})}} \pdk u  \pdj\wb{u}\dx \\
& -\rev 4 \int\brk{\ph'''(\abs{x}) + \frac{2(d-1)}{\abs{x}}\ph''(\abs{x}) - \frac{(d-1)(d-3)}{\abs{x}^2}\brk{\frac{\ph(\abs{x})}{\abs{x}} -\ph'(\abs{x})}} \abs{u}^2\dx \\
	&  -\frac{1}{2}\int\brk{\ph'(\abs{x}) + (d-1) \frac{\ph(\abs{x})}{\abs{x}}} G(u)\dx ,
}
then
\EQ{
	\pdt M(t) = & \int \ph'(\abs{x}) \brk{\abs{\nabla u}^2-\frac{d}{2}G(u)}\dx \\
	& -\rev 4 \int \brk{\ph''' + \frac{2(d-1)}{\abs{x}}\ph''}\abs{u}^2\dx \\
	& + \int \brk{\frac{\ph(\abs{x})}{\abs{x}} -\ph'(\abs{x})} \brk{\abs{\nabla u}^2-\abs{\pdr u}^2}\dx \\
	& + \int \brk{\frac{\ph(\abs{x})}{\abs{x}} -\ph'(\abs{x})} \brk{\frac{(d-1)(d-3)}{4\abs{x}^2}\abs{u}^2- \half{d-1}G(u)}\dx.
}
Since $\ph/\abs{x}-\ph'=0$ and $\ph''=0$ if $\abs{x}\leq R$, there is no singularity at $x=0$ for terms containing $1/|x|$ in the above integrals.  
It is obvious that
\EQ{
\aabs{\int \brk{\ph''' + \frac{2(d-1)}{\abs{x}}\ph''}\abs{u}^2\dx} \lsm \rev{R^2}\norm{u}_2^2.
}
Since $u$ is radial, $\abs{\nabla u}^2-\abs{\pdr u}^2=0$. 

We are going to use radial Sobolev inequality to bound $G(u)$ when $\aabs{x}\goe R$. Note that
\EQ{
	\aabs{\int\brk{ G(u)-G(\chir u) }\dx} \lsm & \int \brk{\aabs{\pd_{u}G(u)} + \aabs{\pd_{\wb{u}}G(u)}}\aabs{u-\chir u}\dx \\
	\lsm & \int_{\abs{x}\goe R} \brk{\abs{\pd_{u}G(u)} + \abs{\pd_{\wb{u}}G(u)}}\abs{u} \dx.
}
Recall that $F(u)=2\la \abs{u}^p u/(p+2)$ and $G(u)=\la p\abs{u}^p u/(p+2)$. Therefore, by radial Sobolev inequality,
\EQ{
	\aabs{\int\brk{ G(u)-G(\chir u) }\dx} \lsm \int_{\abs{x}\goe R} \abs{u}^{p+2} \dx \lsm R^{-\half{d-1}p} \norm{u}_2^2. \label{esti-vimo-power-remainder-1}
}
We can also derive same bound for the following two terms
\EQn{
\int\brk{1-\chir^2}\aabs{G(u)}\dx \lsm \int_{|x|\goe R} \aabs{G(u)} \dx \lsm R^{-\half{d-1}p}, \label{esti-vimo-power-remainder-2}
}
and
\EQn{
\int\aabs{F(u)-F(\chir u)}\dx \lsm R^{-\half{d-1}p}. \label{esti-vimo-power-remainder-3}
}
Since $\ph/\abs{x}-\ph'=0$ if $\abs{x}\leq R$, and $0<\ph/\abs{x}-\ph'\lsm R/\abs{x}$ if $\abs{x}\geq R$, using radial Sobolev inequality again, the last term can be bounded by 
\EQ{
	& \int \brk{\frac{\ph(\abs{x})}{\abs{x}} -\ph'(\abs{x})} \brk{\frac{(d-1)(d-3)}{4\abs{x}^2}\abs{u}^2- \half{d-1}G(u)}\dx \\
	\lsm &  \rev{R^2} + \int_{|x|\goe R} \aabs{G(u)} \dx \lsm R^{-\min\fbrk{(d-1)p/2,2}}.
}

The main term comes from
\EQ{
	& \int \chir^2(\abs{x}) \brk{\abs{\nabla u}^2-\frac{d}{2}G(u)}\dx \\
	= & \int \brk{\abs{\nabla \brk{\chir u}}^2 -\half d G(\chir u) }\dx + \int \chir \De\brk{\chir} \abs{u}^2 \dx \\
	& + \half d \int\brk{1-\chir^2} G(u)\dx  - \half d \int\brk{ G(u)-G(\chir u) }\dx \\
	= & \int \brk{\abs{\nabla \brk{\chir u}}^2 -\half d G(\chir u) }\dx + \rev{R^2} O(\norm{u(t)}_2^2)  \\
	& + \half d \int\brk{1-\chir^2} G(u)\dx - \half d \int\brk{ G(u)-G(\chir u) }\dx.
}
When $\la=-1$, we clearly have 
\EQn{
	\int \brk{\abs{\nabla \brk{\chir u}}^2 -\half d G(\chir u) }\dx \gsm \int \aabs{G(\chir u) }\dx
}
When $\la=1$, it follows from the assumption and variational results that $u(t)$ is a global solution. We claim that there exists $R_0>0$ depending on the energy and mass of initial data, such that $\chir u(t)$ satisfies $\sup_t J(\chir u(t))<m_{\al_1,\be_1}$, for any $R>R_0$. In fact, by assumption, we have that
\EQ{
\sup_{t\in\R}J(u(t)) < m_{\al_1,\be_1}.
}
Since $\chir\loe 1$, we have
\EQ{
J(\chir u(t)) = & \half 1 \norm{\chir u(t)}_{H^1}^2 - \half 1 \int F(\chir u)\dx \\
= & \half 1 \norm{\chir \nabla u(t)}_{L^2}^2 + \half 1 \norm{\chir u(t)}_{L^2}^2 - \half 1 \int F(u)\dx \\ 
& - \half 1 \int \chir \De\brk{\chir} \abs{u}^2 \dx - \half 1 \int F(\chir u) -F(u) \dx \\
\loe & \half 1 \norm{\nabla u(t)}_{L^2}^2 + \half 1 \norm{ u(t)}_{L^2}^2 - \half 1 \int F(u)\dx\\ 
& + C\norm{u}_{L_t^\I L_x^2}^2 R^{-\min\fbrk{2,\half{d-1}p}} \\
\loe & J(u(t)) + CR^{-\min\fbrk{2,\half{d-1}p}},
}
where the constant $C$ depends only on the energy and mass. Therefore, we take $R_0$ such that
\EQ{
CR_0^{-\min\fbrk{2,\half{d-1}p}} < \half 1 \brk{m_{\al_1,\be_1}-J(u_0)}.
}
By the above claim, for any $t\in\R$, the set $\fbrk{\chir u(t):R_0<R}$ can be viewed as a continuous orbit in $\fbrk{J(\ph)<m_{\al_1,\be_1}}$. Since $u(t)\in\K^+$ is a limit point of the set $\fbrk{\chir u(t):R_0<R}$ and $\K^+$ is connected and open, we have that $K(\chir u(t)) \goe 0$ for all $t\in\R$ and $R>R_0$. By the choice of $R_0$, for any $R>R_0$, we have
\EQ{
m_{\al_1,\be_1} - \sup_{t\in\R} J(\chir u(t)) \goe \half 1 \brk{m_{\al_1,\be_1} - J(u_0)} = C,
}
where the constant is independent of $R$. Therefore, by variation estimate, for all $t\in\R$ and $R>R_0$,
\EQn{
	K(\chir u(t)) \goe & C\min\fbrk{m_{\al_1,\be_1}-\sup_{t\in\R}J(\chir u(t)), K^{Q}(\chir u(t))} \\
	\goe & C \min\fbrk{C, K^{Q}(\chir u(t))},
}
where the implicit constant is independent of $t$ and $R$. If $K^{Q}(\chir u(t))\goe C$, by Sobolev inequality
\EQ{
\int\aabs{G(\chir u(t))}\dx \lsm \int\aabs{G(u(t))}\dx \lsm \norm{u(t)}_{L_x^{p+2}}^{p+2} \lsm \norm{u(t)}_{L_t^\I H_x^1}^{p+2} \lsm 1,
}
we have $K(\chir u(t))\goe C \goe C \int G(\chir u(t)) \dx$. If $K^{Q}(\chir u(t))\loe C$, note that $K(\chir u(t))\goe0$ implies $K^Q(\chir u(t)) \goe \int G(\chir u(t)) \dx$, then
\EQ{
K(\chir u(t))\goe C K^{Q}(\chir u(t)) \goe C \int G(\chir u(t)) \dx.
}
Therefore, for both defocusing and focusing cases $\la=\pm1$, there exists $R_0>0$ such that for any $R>R_0$,
\EQ{
\int \chir^2(\abs{x}) \brk{\abs{\nabla u}^2-\frac{d}{2}G(u)}\dx \goe C \int G(\chir u(t)) \dx - CR^{-\min\fbrk{2,\half{d-1}p}},
}
where the constants depend only on the energy and mass.

Combine the above estimates, and integrate on the interval $[T_1,T_2]$, then we get
\EQ{
	\int_{T_1}^{T_2}\int \aabs{G(\chir u)}\dx\dt \lsm R + \brk{T_2-T_1}R^{-\min\fbrk{(d-1)p/2,2}},
}
for any $R>R_0$. In fact, the above inequality also holds for $R<R_0$, with the implicit constant depending on the conserved quantities. Since $R_0$ is a constant depending only on the energy and mass, for  $0<R<R_0$,
\EQ{
	\int_{T_1}^{T_2}\int \aabs{G(\chir u)}\dx\dt \lsm \brk{T_2-T_1} \lsm \brk{T_2-T_1}R^{-\min\fbrk{(d-1)p/2,2}}.
}
Moreover, using the estimate \eqref{esti-vimo-power-remainder-1},
\EQ{
	\aabs{\int_{T_1}^{T_2}\int \brk{G(u)-G(\chir u)}\dx\dt} \lsm  \brk{T_2-T_1}R^{-\min\fbrk{(d-1)p/2,2}},
}
then for any $0<T_1<T_2$ and $R>0$,
\EQ{
	\int_{T_1}^{T_2}\int \aabs{G(u)}\dx\dt \lsm R + \brk{T_2-T_1}R^{-\min\fbrk{(d-1)p/2,2}}.
}

Next, take $T_1=2^kT$, $T_2=2^{k+1}T$ for $k\in\N$ and
\EQ{
	R= \brk{2^k T}^{\max\fbrk{\frac{2}{2+(d-1)p},\rev3}},
}
then 
\EQ{
\int_{2^k T}^{2^{k+1}T}\int \aabs{G(u)}\dx\dt \lsm \brk{2^k T}^{\max\fbrk{\frac{2}{2+(d-1)p},\rev3}}.
}
Therefore, for any $\de>0$,
\EQ{
\int_{2^k T}^{2^{k+1}T} t^{-\max\fbrk{\frac{2}{2+(d-1)p},\rev3}-\de}\int \aabs{G(u)}\dx\dt \lsm \brk{2^k T}^{-\de}.
}
then the proposition follows by summing up the integral on intervals $[2^kT,2^{k+1}T]$, $k\in\N$.
\end{proof}

In the 2D case, there is a logarithmic problem due to the weak decay rate using the argument in \cite{dodson2017new-radial}.  We will exploit extra strength of the virial-Morawetz estimate.  
This is done by the following elementary lemmas. 

\begin{lem}\label{lem:eledecay}
Assume $f$ is non-negative and satisfies for some $\alpha\in (0,1)$
\EQ{
\int_1^\infty \frac{f(t)}{t^\alpha}dt<\infty.
}
Then for any $\epsilon, M>0$, there exists $T>M$ such that
\EQ{
\int_{T-\frac{1}{2\alpha}T^{1-\alpha}}^T f(t)dt<\epsilon.
}
\end{lem}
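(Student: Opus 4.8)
The plan is to argue by contradiction. Suppose the conclusion fails for some $\epsilon, M > 0$, so that
\[
\int_{T - \frac{1}{2\alpha}T^{1-\alpha}}^T f(t)\,dt \geq \epsilon \quad \text{for every } T > M.
\]
I would then manufacture a divergent lower bound for $\int_1^\infty f(t)t^{-\alpha}\,dt$ out of these ``bad'' windows, contradicting the hypothesis. The guiding observation is that on each window $[T - \frac{1}{2\alpha}T^{1-\alpha}, T]$ the weight $t^{-\alpha}$ is comparable to $T^{-\alpha}$, so the assumed lower bound $\epsilon$ for $\int f$ translates into a lower bound $\epsilon T^{-\alpha}$ for the weighted integral; the whole issue is then whether summing these weighted contributions over a chain of disjoint windows diverges.

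First I would build the chain. Fix $b_0 > \max(M,1)$ and define $b_n$ recursively as a solution $b_n > b_{n-1}$ of $b_n - \frac{1}{2\alpha}b_n^{1-\alpha} = b_{n-1}$; such a solution exists by the intermediate value theorem, since $x \mapsto x - \frac{1}{2\alpha}x^{1-\alpha}$ is continuous, lies strictly below $x$, and tends to $+\infty$. The intervals $I_n := [b_{n-1}, b_n]$ are then disjoint, adjacent, contained in $[b_0,\infty) \subset (M,\infty)$, and each has exactly the form $[T_n - \frac{1}{2\alpha}T_n^{1-\alpha}, T_n]$ with $T_n = b_n > M$. A short argument shows $b_n \to \infty$ (an increasing bounded sequence would force a positive fixed point of the recursion, which is impossible). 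Hence the contradiction hypothesis applies on every $I_n$, and since $t \leq b_n$ there,
\[
\int_{I_n} \frac{f(t)}{t^\alpha}\,dt \geq \frac{1}{b_n^\alpha}\int_{I_n} f(t)\,dt \geq \frac{\epsilon}{b_n^\alpha}.
\]

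The crux, and the step I expect to be the \emph{main obstacle}, is to control the growth of $b_n^\alpha$. Here I would invoke the mean value theorem: substituting $b_n - b_{n-1} = \frac{1}{2\alpha}b_n^{1-\alpha}$ gives $b_n^\alpha - b_{n-1}^\alpha = \alpha \xi_n^{\alpha-1}(b_n - b_{n-1}) = \frac12 (b_n/\xi_n)^{1-\alpha}$ for some $\xi_n \in (b_{n-1}, b_n)$. Since $\xi_n \geq b_{n-1}$ and $b_n/b_{n-1} = (1 - \frac{1}{2\alpha}b_n^{-\alpha})^{-1} \to 1$ as $n \to \infty$, the ratio $b_n/\xi_n$ stays bounded, so that $b_n^\alpha - b_{n-1}^\alpha \leq C$ for all $n$ (with $C$ close to $\frac12$ once $b_0$ is large). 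Telescoping yields $b_n^\alpha \leq b_0^\alpha + Cn$, whence $\epsilon/b_n^\alpha \gtrsim 1/n$. Summing the displayed lower bounds over the disjoint windows $I_n \subset [1,\infty)$ then gives $\int_1^\infty f(t)t^{-\alpha}\,dt \geq \epsilon \sum_n b_n^{-\alpha} = \infty$, the desired contradiction. The essential insight is simply that $t \mapsto t^\alpha$ grows by an $O(1)$ amount across each window of length $\frac{1}{2\alpha}t^{1-\alpha}$, which converts the assumed uniform mass $\epsilon$ into a divergent harmonic-type series clashing with the convergent weighted integral.
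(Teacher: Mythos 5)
Your argument is correct, but it is genuinely different from the one in the paper. The paper's proof is a two-line change of variables: setting $s=t^\alpha$ turns the hypothesis into $\int_1^\infty f(s^{1/\alpha})s^{1/\alpha-1}\,ds<\infty$, so unit-length windows $[T_1-1,T_1]$ of arbitrarily small mass exist, and undoing the substitution sends $[T_1-1,T_1]$ to $[(T_1-1)^{1/\alpha},T_1^{1/\alpha}]$, whose length is at least $\frac{1}{2\alpha}T^{1-\alpha}$ with $T=T_1^{1/\alpha}$; monotonicity of the integral for $f\geq 0$ finishes the proof. You instead argue by contradiction, tiling $(M,\infty)$ by adjacent windows $[b_{n-1},b_n]$ with $b_n-\frac{1}{2\alpha}b_n^{1-\alpha}=b_{n-1}$ and showing via the mean value theorem that $b_n^\alpha-b_{n-1}^\alpha=\frac12(b_n/\xi_n)^{1-\alpha}=O(1)$, so that $\sum_n b_n^{-\alpha}$ diverges like the harmonic series and clashes with the convergent weighted integral. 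The two proofs encode the same fact --- that $t\mapsto t^\alpha$ advances by $O(1)$ across each window of length $\frac{1}{2\alpha}t^{1-\alpha}$ --- but the paper packages it as a substitution while you unpack it into an explicit divergent series; the paper's route is shorter and avoids the (correctly handled, but fiddly) bookkeeping about existence of the chain $\{b_n\}$, its divergence, and the uniform bound on $b_n/b_{n-1}$, whereas yours makes the quantitative mechanism more visible and shows directly that the ``bad'' values of $T$ cannot exhaust any half-line. Both establish the lemma as stated.
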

\begin{proof}
By change of variable $s=t^\alpha$, we have
\EQ{
\int_1^\infty \frac{f(s^{1/\alpha})}{\alpha s}s^{-1+1/\alpha}ds<\infty.
}
Then for any $\epsilon, M>0$, there exists $T_1>M$ such that 
\EQ{
\int_{T_1-1}^{T_1} f(s^{1/\alpha})s^{-1+1/\alpha}ds<\epsilon.
}
By change of variable back $s=t^\alpha$, we get 
\EQ{
\int_{(T_1-1)^{1/\alpha}}^{T_1^{1/\alpha}} f(t)dt<\epsilon.
}
Note that $T_1^{1/\alpha}-(T_1-1)^{1/\alpha}=T_1^{1/\alpha}[1-(1-\frac{1}{T_1})^{1/\alpha}]\geq \frac{1}{2\alpha}T_1^{1/\alpha-1}$. Taking $T=T_1^{1/\alpha}$, we complete the proof of the lemma.
\end{proof}

\begin{lem}\label{lem:elemint}
Assume $1\ll\ta\ll T$, $a\goe 1$ and $b>-1$. Then
	\EQ{
	\int_1^{T-\ta} \brk{T-s}^{-a} s^b \ds \lsm \ncase{
		& T^b\log\jb{T/\ta}, & \text{ when $a=1$,} \\
		& T^b \ta^{1-a}, & \text{ when $a>1$.}
		}
	}
\end{lem}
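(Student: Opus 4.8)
The plan is to split the interval of integration at the midpoint $s=T/2$ and estimate the two resulting pieces separately, on each exploiting that one of the two factors $(T-s)^{-a}$ or $s^b$ is essentially constant of a definite size in $T$. This reduces everything to two elementary one-variable integrals, one of which carries the logarithmic/power dichotomy.

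First I would treat the near region $1\loe s\loe T/2$. Here $T-s$ stays comparable to $T$, so $(T-s)^{-a}\lsm T^{-a}$, and since $b>-1$ the remaining integral satisfies $\int_1^{T/2} s^b\ds \lsm T^{b+1}$. Thus the near piece is $\lsm T^{b+1-a}$. Because $a\goe1$ and $\ta\ll T$ one has $T^{b+1-a}=T^b\,T^{1-a}\loe T^b\,\ta^{1-a}$ (the exponent $1-a$ is $\loe0$, so replacing $T$ by the smaller $\ta$ only enlarges the power), while for $a=1$ this is simply $T^b$. Hence the near piece is already dominated by the claimed right-hand side in both cases.

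Next I would treat the far region $T/2\loe s\loe T-\ta$. Here $s$ is comparable to $T$, so $s^b\lsm T^b$ irrespective of the sign of $b$, and the change of variables $r=T-s$ turns $\int_{T/2}^{T-\ta}(T-s)^{-a}\ds$ into $\int_{\ta}^{T/2} r^{-a}\,dr$. When $a=1$ this equals $\log\brk{T/(2\ta)}\lsm \log\jb{T/\ta}$, and when $a>1$ it equals $(a-1)^{-1}\mbrk{\ta^{1-a}-(T/2)^{1-a}}\lsm \ta^{1-a}$. Multiplying by the factor $T^b$ produces exactly the two stated bounds, and combining with the near region completes the argument.

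I do not expect a serious obstacle, since the estimate is elementary; the one point requiring care is the comparison of the near-region contribution $T^{b+1-a}$ against the target $T^b\ta^{1-a}$ when $a>1$, which relies on the hypothesis $\ta\ll T$ (in fact only $\ta\loe T$) together with the monotonicity of $x\mapsto x^{1-a}$. The case split at $a=1$ versus $a>1$ is precisely what generates the logarithm $\log\jb{T/\ta}$ as opposed to the power $\ta^{1-a}$, and it enters only through the far-region integral $\int_\ta^{T/2} r^{-a}\,dr$.
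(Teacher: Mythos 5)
Your proposal is correct and is essentially identical to the paper's argument: the paper also splits the integral at $s=T/2$, bounds the near piece by $T^{-a}T^{b+1}$ using $b>-1$, and bounds the far piece by $T^b\int_{T/2}^{T-\ta}\abs{T-s}^{-a}\ds$, which yields the stated logarithm/power dichotomy. Your additional remark justifying $T^{b+1-a}\loe T^b\ta^{1-a}$ when $a>1$ is a detail the paper leaves implicit, but it is the same proof.
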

\begin{proof}
We have
\EQ{
	\int_1^{T-\ta} \brk{T-s}^{-a} s^b \ds\leq& (\int_1^{T/2}+\int_{T/2}^{T-\tau}) \brk{T-s}^{-a} s^b \ds\\
	\lsm &T^{-a}T^{b+1}+T^b \int_{T/2}^{T-\tau}|T-s|^{-a}ds\\
	\lsm &\ncase{
		& T^b\log\jb{T/\ta}, & \text{ when $a=1$,} \\
		& T^b \ta^{1-a}, & \text{ when $a>1$.}
		}
	}
\end{proof}

Let $\de=\de(p)>0$ be sufficiently small. We define the exponent $\al$ as
\EQn{
	\al=\max\fbrk{\frac{2}{2+(d-1)p},\rev3}+\de.
} 
By Lemma \ref{lem:eledecay}, we immediately get
\begin{cor}
	Let $d=2$. Suppose that all the assumptions in Proposition \ref{prop-vi-mora-power} hold. For any $\ep>0$, $T>0$, there exists $T_0=T_0(\ep,T)>T$,
	\EQn{\label{esti-decay-large-time-power}
		\int_{T_0-C T_0^{1-\al}}^{T_0} \int \aabs{G(u)}\dx\dt \lsm \ep.
	}
\end{cor}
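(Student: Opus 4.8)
The plan is to deduce this as an immediate consequence of the time-integrability furnished by Proposition \ref{prop-vi-mora-power} together with the elementary decay Lemma \ref{lem:eledecay}. First I would introduce the non-negative function
\EQ{
f(t) := \int \aabs{G(u(t,x))}\dx,
}
which is finite for each $t$ since $u(t)\in H^1(\R^2)$ embeds into $L^{p+2}$ and $\aabs{G(u)}\lsm\aabs{u}^{p+2}$. The whole content of the corollary is that $f$ satisfies exactly the integrability hypothesis of Lemma \ref{lem:eledecay} with the \emph{same} exponent $\al$.

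Indeed, for $d=2$ the exponent appearing in \eqref{esti-vi-mora-power} is $\max\{\frac{2}{2+(d-1)p},\rev3\}+\de=\al$, so that estimate (say with lower endpoint $1$) reads $\int_1^\infty t^{-\al}f(t)\dt\lsm1<\infty$, which is precisely the hypothesis $\int_1^\infty f(t)/t^\al\dt<\infty$ of Lemma \ref{lem:eledecay}. Before invoking the lemma I must only check that $\al\in(0,1)$: since $d=2$ forces $p>4/d=2$, we have $\frac{2}{2+(d-1)p}=\frac{2}{2+p}<\rev2$, so $\max\{\frac{2}{2+p},\rev3\}\in[\rev3,\rev2)$, and choosing $\de$ small keeps $\al$ strictly between $0$ and $1$. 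With the hypotheses in place I would apply Lemma \ref{lem:eledecay} directly, taking its parameter $M$ equal to the given $T$ and the same $\ep$; this produces a time $T_0>T$ with
\EQ{
\int_{T_0-\frac{1}{2\al}T_0^{1-\al}}^{T_0} \int \aabs{G(u)}\dx\dt<\ep.
}
Setting $C=\frac{1}{2\al}$ (a constant depending only on $p$ and $\de$) yields exactly \eqref{esti-decay-large-time-power}.

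There is essentially no hard step: the analytic work is entirely contained in Proposition \ref{prop-vi-mora-power} and Lemma \ref{lem:eledecay}, and what remains is bookkeeping. The only point requiring care — the closest thing to an obstacle — is confirming that $\al$ lands in the admissible range $(0,1)$ for every $p>2$, which hinges on the mass-supercritical threshold $p>4/d$. Had $\al$ reached $1$, the change-of-variables argument underlying Lemma \ref{lem:eledecay} would degenerate and the logarithmic time divergence in 2D that the authors are fighting would reappear; this is exactly why the extra decay $\de$ in the definition of $\al$ must be chosen small.
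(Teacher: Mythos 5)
Your proof is correct and is exactly the paper's argument: the authors state the corollary as an immediate consequence of applying Lemma \ref{lem:eledecay} (with $M=T$ and $C=\frac{1}{2\al}$) to the function $f(t)=\int\aabs{G(u(t,x))}\dx$, whose weighted integrability is supplied by \eqref{esti-vi-mora-power}. Your additional verification that $\al\in(0,1)$ for all $p>2$ is a worthwhile sanity check that the paper leaves implicit.
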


\subsection{Proof of scattering}

We assume $u$ is a solution stated in Proposition \ref{prop-vi-mora-power}. Then $\norm{u}_{H^1}\lsm 1$. We will show some space-time bound.
We define
\EQ{
	\norm{u}_{S(I)} := 
	\ncase{
		& \norm{\jb{\nabla} u}_{L_{t,x}^{\frac{2(d+2)}{d}}(I\times \R^d)},
		& \text{NLS case,} \\
		& \norm{\jb{\nabla}^{1/2} u}_{L_{t,x}^{\frac{2(d+2)}{d}}(I\times \R^d)},
		& \text{NLKG case,} \\
	}
}
and
\EQ{
	\norm{u}_{W(I)} := \norm{u}_{L_{t,x}^{\frac{(d+2)p}{d}}(I\times \R^d)}.
}
We denote $W_T=W([T,\infty))$, similar for $S_T, L_{T,x}^\infty$.
The main task is to show: $\forall \epsilon>0$, $\exists T>0$ such that 
\EQ{\label{eq:weaknormdecay}
\norm{S(t-T)u(T)}_{W_T}<\epsilon, \quad \mbox{for NLS}\\
\norm{\dot S(t-T)u(T)+S(t-T)u_t(T)}_{W_T}<\epsilon, \quad \mbox{for NLKG}
}
where $S(t)=e^{it\Delta}$ (NLS) or $S(t)=\frac{\sin t\sqrt{-\Delta}}{\sqrt{-\Delta}}$ (NLKG). 
Once we have \eqref{eq:weaknormdecay}, then by the integral equation (e.g. for NLS)
\EQ
{
u(t)=S(t-T)u(T)-i\int_{T}^t S(t-s)f(u)ds
}
we get the following type of estimates: for some $a> 1$ and $b,a',b'>0$
\EQ{\label{eq:estWISI}
\norm{u}_{W_T}\lsm& \epsilon+\norm{u}_{W_T}^{a} \norm{u}_{S_T}^b,\\
\norm{u}_{S_T}\lsm& 1+\norm{u}_{W_T}^{a'} \norm{u}_{S_T}^{b'},\\
}
from which we get $\norm{u}_{S(\R)}<\infty$. In 2D case, we have  \eqref{eq:estWISI} hold with $a=a'=p>2$ and $b=b'=1$. Thus scattering follows. 

Now we prove \eqref{eq:weaknormdecay}. For NLS, we have
\EQ{
S(t-T)u(T)=&S(t)u_0-i\int_0^T S(t-s)f(u)ds\\
=&S(t)u_0-i\int_0^{T-\tau} S(t-s)f(u)ds-i\int_{T-\tau}^TS(t-s)f(u)ds\\
:=&I+II+III
}
where $\tau=\frac{1}{2\alpha}T^{1-\alpha}$. For term $I$, it is obvious that $\exists T>0$ such that $\norm{I}_{W_T}<\epsilon$. For terms $II$, by $\int_{T_1}^{T_2}S(t-s)f(u)ds=S(t-T_2)u(T_2)-S(t-T_1)u(T_1)$, we get
$\norm{II}_{S_T}\lsm 1$.
Thus by interpolation, it suffices to show 
\EQ{
\norm{II}_{L_{T,x}^\infty}< \epsilon.
}
We will use \eqref{esti-vi-mora-power}. 
It follows from H\"{o}lder and radial Sobolev inequality that 
\EQ{
\norm{II}_{L_{T,x}^\I}
	\lsm & \int_1^{T-\ta} \rev{\abs{T-s}}\norm{u}_{p+1}^{p+1}\ds+\tau^{-1} \\
	\lsm & \int_1^{T-\ta} \rev{\abs{T-s}}\norm{u}_{L_{\abs{x}\loe s^\de}^{p+1}}^{p+1}\ds + \int_1^{T-\ta} \rev{\abs{T-s}}\norm{u}_{L_{\abs{x}\goe s^\de}^{p+1}}^{p+1}\ds+\tau^{-1}  \\
	\lsm & \int_1^{T-\ta} \rev{\abs{T-s}} s^{\frac{2\de}{p+2}} \norm{u}_{L_{x}^{p+2}}^{p+1}\ds + \int_1^{T-\ta} \rev{\abs{T-s}} s^{-\half 1 \brk{p-1}\de}\norm{u}_{2}^2\ds+\tau^{-1}.
}
The second term is bounded by $T^{-(p-1)\de/4}$ by Lemma \ref{lem:elemint}. For the first term, in order to cover the logarithmic divergence of the time integral in $s$, we use virial-Morawetz estimate \eqref{esti-vi-mora-power} with $T=1$, then
\EQ{
    \int_1^{T-\ta} s^{-\al}\norm{u}_{p+2}^{p+2}\ds \lsm 1.
}
By H\"{o}lder inequality and the above estimate and Lemma \ref{lem:elemint}, we have
\EQ{
& \int_1^{T-\ta} \rev{\abs{T-s}} s^{\frac{2\de}{p+2}} \norm{u}_{L_{x}^{p+2}}^{p+1}\ds \\
\lsm & \brk{\int_1^{T-\ta} \brk{T-s}^{-(p+2)} s^{2\de+\al(p+1)}\ds}^{\rev{p+2}} \brk{\int_1^{T-\ta} s^{-\al}\norm{u}_{p+2}^{p+2}\ds}^{\frac{p+1}{p+2}} \\
\lsm & \brk{\int_1^{T-\ta} \brk{T-s}^{-(p+2)} s^{2\de+ \al(p+1)} \ds}^{\rev{p+2}} \\
\lsm & T^{\frac{p+1}{p+2}\brk{2\al-1+2\de/(p+1)}} \lsm T^{-\frac{p+1}{2(p+2)}\min\fbrk{\frac{p-2}{p+2},\rev3}} <\epsilon,
}
by taking $T>0$ sufficiently large since $p>2$.

For term $III$, we also have $\norm{III}_{S_T}\lsm 1$ and we will use \eqref{esti-decay-large-time-power}.  By interpolation, it suffices to show $\norm{III}_{L_{T,x}^4}<\epsilon$. Using Strichartz estimates and interpolation we get
\EQ{
\norm{III}_{L_{T,x}^4}\lsm& \norm{|u|^pu}_{L_{t\in [T-\tau, T],x}^{4/3}}\\
\lsm&  \norm{u}_{L_{t\in [T-\tau, T],x}^{4(p+1)/3}}^{p+1}\lsm \norm{u}_{L_{t\in [T-\tau, T],x}^{p+2}}^{\frac{3p+6}{4}}\norm{u}_{L_{t\in [T-\tau, T]}^{\infty} H^1}^{\frac{p-2}{4}}<\epsilon.
}

Similarly, for NLKG we have
\EQ{
&\dot S(t-T)u(T)+S(t-T)u_t(T)\\
=&\dot S(t)u_0+S(t)u_1+\int_0^T S(t-s)f(u)ds\\
=&(\dot S(t)u_0+S(t)u_1)+\int_0^{T-\tau} S(t-s)f(u)ds+\int_{T-\tau}^T S(t-s)f(u)ds\\
:=&I'+II'+III'.
}
Term $I'$ is trivial to estimate. For term $II'$, as term $II$ we have
\EQ{
\norm{\jb{D}^{-1}II'}_{L^\infty_{T,x}}\lsm  \int_0^{T-\ta} \rev{\abs{T-s}}\norm{u}_{p+1}^{p+1}\ds<\epsilon.
}
Term $III'$ is similar to term $III$. 

\section{Exponential type nonlinearity}
Throughout this section, we assume $d=2$ and  
\EQ{
f(u) = \la \brk{e^{\ka_0\abs{u}^2}-1-\ka_0\abs{u}^2}u
}
in equations \eqref{equ-main-nls} and \eqref{equ-main-kg} with $\la=\pm 1$. 
In order to deal with the exponential non-linear term, we need sharp Trudinger-Moser inequality:
\begin{lem}[\cite{ruf2005sharp}] We have
\EQ{ \label{esti-TM-ineq}
\sup_{u:\ \norm{\nabla u}_2^2+\norm{u}_2^2\leq 1}\int_{\R^2}(e^{4\pi |u|^2}-1)dx\leq C
}
\end{lem}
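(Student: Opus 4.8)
The plan is to establish \eqref{esti-TM-ineq} by Schwarz symmetrization together with the one-dimensional Moser change of variables, which turns it into a weighted exponential integral on the line that is amenable to the classical Moser lemma. First I would reduce to radial decreasing functions. Let $u^*$ denote the Schwarz symmetrization of $\abs{u}$; then $\norm{\nabla u^*}_2\loe\norm{\nabla u}_2$, $\norm{u^*}_2=\norm{u}_2$, and $\int_{\R^2}(e^{4\pi\abs{u}^2}-1)\dx=\int_{\R^2}(e^{4\pi(u^*)^2}-1)\dx$, the latter because $e^{4\pi s}-1$ is increasing in $s=\abs{u}^2$. Hence it suffices to bound the supremum over radial, non-increasing $u=u(r)\goe0$ with $\norm{\nabla u}_2^2+\norm{u}_2^2\loe1$.

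Next I would pass to the Moser variable $r=e^{-t/2}$, $t\in\R$, and set $w(t)=u(e^{-t/2})$; note $w$ is non-decreasing in $t$, with $t\to+\I$ corresponding to the origin and $t\to-\I$ to spatial infinity. A direct computation gives
\EQ{
\norm{\nabla u}_2^2=4\pi\int_\R\dot w^2\dt, \quad \norm{u}_2^2=\pi\int_\R w^2 e^{-t}\dt,
}
and $\int_{\R^2}(e^{4\pi u^2}-1)\dx=\pi\int_\R(e^{4\pi w^2}-1)e^{-t}\dt$. After normalizing $\Phi=\sqrt{4\pi}\,w$, the constraint reads $\int_\R\dot\Phi^2\dt+\rev4\int_\R\Phi^2 e^{-t}\dt\loe1$ and the quantity to bound becomes $\pi\int_\R(e^{\Phi^2}-1)e^{-t}\dt$. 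I would then split at $t=0$. On $t\loe0$ (that is $r\goe1$) the radial Sobolev estimate $\abs{u(r)}\lsm r^{-1/2}\norm{u}_{H^1}$ shows $\Phi$ is bounded, so $e^{\Phi^2}-1\lsm\Phi^2$ there and $\int_{-\I}^0(e^{\Phi^2}-1)e^{-t}\dt\lsm\int_\R\Phi^2 e^{-t}\dt\loe4$, controlled purely by the $L^2$ part of the constraint. On $t\goe0$ (the Moser region, $r\loe1$) I would invoke the Moser lemma: if $\psi(0)=0$ and $\int_0^\I\dot\psi^2\dt\loe1$, then $\int_0^\I e^{\psi^2-t}\dt\loe C_0$. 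Applying it to $\psi=\Phi-a$, where $a=\Phi(0)=\sqrt{4\pi}\,u(1)$ is bounded by radial Sobolev, and using $\Phi^2\loe(1+\eta)\psi^2+(1+\eta^{-1})a^2$, reduces the Moser-region integral to $e^{(1+\eta^{-1})a^2}\int_0^\I e^{(1+\eta)\psi^2-t}\dt$.

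The crux --- and the reason the full $H^1$ norm rather than $\norm{\nabla u}_2$ alone is imposed --- is retaining the sharp exponent $4\pi$ in the Moser region while absorbing the nonzero boundary value $a$. The Moser lemma is applicable only when the inflated budget satisfies $(1+\eta)b\loe1$, where $b=\int_0^\I\dot\Phi^2\dt$; this forces $b<1$ and leaves only a margin $\eta\sim(1-b)$, which in turn makes the prefactor $e^{(1+\eta^{-1})a^2}$ threaten to blow up as $b\to1$. The resolution is to exploit the coupling in the constraint: since $\rev4\int_\R\Phi^2 e^{-t}\dt\loe1-b$ and, by the gradient control $\int_{-\I}^0\dot\Phi^2\dt\loe1-b$, one has $\abs{\Phi(t)-a}\lsm\sqrt{1-b}$ on $[-1,0]$, comparing these two facts yields $a\lsm\sqrt{1-b}$. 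With the choice $\eta\sim(1-b)$ this gives $(1+\eta^{-1})a^2\lsm1$ uniformly, so the prefactor stays bounded and the argument closes. Making this trade-off quantitative is the step I expect to require the most care; the symmetrization, the change-of-variable identities, and the exterior region are routine.
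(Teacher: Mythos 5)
The paper does not prove this lemma at all: it is quoted verbatim from Ruf \cite{ruf2005sharp}, so there is no internal proof to compare against. Your sketch is, in substance, a correct reconstruction of Ruf's own argument, and the computations check out. The change of variables $r=e^{-t/2}$ does give $\norm{\nabla u}_2^2=4\pi\int\dot w^2\dt$ and $\norm{u}_2^2=\pi\int w^2e^{-t}\dt$; the exterior region is indeed controlled by the radial lemma plus the mass term; and you have correctly identified the one genuinely delicate point, namely that the boundary value $a=\sqrt{4\pi}\,u(1)$ must be shown to vanish like $\sqrt{1-b}$ as the interior Dirichlet budget $b=\int_0^\I\dot\Phi^2\dt$ saturates, so that the prefactor $e^{(1+\eta^{-1})a^2}$ with $\eta\sim 1-b$ stays bounded. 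That trade-off does close with a fixed cut at $r=1$: since $\rev4\int_{-1}^0\Phi^2e^{-t}\dt\loe 1-b$ and $\Phi\goe a-\sqrt{1-b}$ on $[-1,0]$ (monotonicity plus Cauchy--Schwarz on $\int_{-1}^0\dot\Phi^2\dt\loe 1-b$), one gets $a\loe C\sqrt{1-b}$, and the choice $\eta=1-b$ satisfies $(1+\eta)b=1-(1-b)^2\loe1$, so Moser's one-dimensional lemma applies to $\sqrt{1+\eta}\,(\Phi-a)$ with a uniformly bounded exponential prefactor; the regime $b\loe\rev2$ is handled separately with $\eta=1$ since $a$ is bounded outright by the radial lemma. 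The only caveat is that your argument, like Ruf's, ultimately rests on Moser's lemma ($\psi(0)=0$, $\int_0^\I\dot\psi^2\dt\loe1$ implies $\int_0^\I e^{\psi^2-t}\dt\loe C_0$), which is itself a nontrivial input that you are entitled to cite but should state explicitly as such. For the purposes of this paper, citing \cite{ruf2005sharp} remains the appropriate course; your sketch would belong in an appendix only if one wanted the paper self-contained.
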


\begin{cor}\label{cor:Labound}
Let $a\geq 1$, $\ph\in H^1(\R^2)$ and $\norm{\nabla \ph}_2<\sqrt{4\pi a^{-1}}$. Then
	\EQn{
		\int_{\R^2}\brk{e^{\abs{\ph}^2}-1}^a\dx\lsm\frac{\norm{\ph}_2^2}{4\pi a^{-1}-\norm{\nabla\ph}_2^2}.
	}
\end{cor}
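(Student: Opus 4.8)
The plan is to deduce the estimate from the sharp Trudinger--Moser inequality \eqref{esti-TM-ineq} in two steps: an elementary pointwise inequality that disposes of the power $a$, and a scaling argument exploiting the scale invariance of the Dirichlet energy in two dimensions.

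First I would record the elementary bound
\EQ{
(e^x-1)^a \loe e^{ax}-1, \qquad x\goe 0,\ a\goe 1,
}
which reduces, after setting $y=e^x-1\goe0$, to $(1+y)^a\goe 1+y^a$; the latter holds because $h(y)=(1+y)^a-1-y^a$ satisfies $h(0)=0$ and $h'(y)=a[(1+y)^{a-1}-y^{a-1}]\goe0$ for $a\goe1$. Taking $x=\abs{\ph}^2$ and integrating gives $\int_{\R^2}(e^{\abs{\ph}^2}-1)^a\dx\loe\int_{\R^2}(e^{a\abs{\ph}^2}-1)\dx$, so it suffices to bound the right-hand side. Setting $u=\sqrt{a/(4\pi)}\,\ph$, so that $4\pi\abs{u}^2=a\abs{\ph}^2$, the hypothesis $\norm{\nabla\ph}_2^2<4\pi a^{-1}$ becomes $\ga:=\norm{\nabla u}_2^2=\tfrac{a}{4\pi}\norm{\nabla\ph}_2^2<1$, and the problem is reduced to the refined (subcritical) estimate
\EQ{
\int_{\R^2}(e^{4\pi\abs{u}^2}-1)\dx \lsm \frac{\norm{u}_2^2}{1-\ga},
}
since substituting back yields $\frac{\norm{u}_2^2}{1-\ga}=\frac{\norm{\ph}_2^2}{4\pi a^{-1}-\norm{\nabla\ph}_2^2}$.

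To prove this refined estimate I would rescale. For $\la>0$ set $v(x)=u(\la x)$; then $\norm{\nabla v}_2^2=\norm{\nabla u}_2^2=\ga$ is \emph{unchanged} (the key 2D feature), while $\norm{v}_2^2=\la^{-2}\norm{u}_2^2$ and $\int_{\R^2}(e^{4\pi\abs{v}^2}-1)\dx=\la^{-2}\int_{\R^2}(e^{4\pi\abs{u}^2}-1)\dx$. Choosing $\la^2=\norm{u}_2^2/(1-\ga)$ (we may assume $\ph\not\equiv0$, else the claim is trivial) gives $\norm{\nabla v}_2^2+\norm{v}_2^2=\ga+(1-\ga)=1$, so \eqref{esti-TM-ineq} applies to $v$ and gives $\int(e^{4\pi\abs{v}^2}-1)\dx\loe C$; undoing the scaling yields $\int(e^{4\pi\abs{u}^2}-1)\dx\loe C\la^2=C\norm{u}_2^2/(1-\ga)$, which is exactly the refined estimate.

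The only substantive point, and the step I expect to carry the argument, is this scaling reduction: it works precisely because in $d=2$ the Dirichlet energy $\norm{\nabla u}_2^2$ is scale invariant, so a dilation can freely tune $\norm{u}_2^2$ to saturate the combined-norm constraint in \eqref{esti-TM-ineq} without touching $\ga$. This is what upgrades the critical inequality with an absolute constant into the subcritical inequality with the sharp $(1-\ga)^{-1}$ dependence; the elementary inequality and the substitution are routine bookkeeping.
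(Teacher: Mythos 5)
Your proof is correct and follows essentially the same route as the paper: the pointwise inequality $(e^z-1)^a\loe e^{az}-1$ to absorb the power $a$, followed by the 2D dilation argument that tunes $\norm{\cdot}_2^2$ to saturate the constraint in \eqref{esti-TM-ineq} while leaving the Dirichlet energy fixed. The only cosmetic difference is that the paper applies the pointwise inequality at the level of the Trudinger--Moser inequality itself (producing the variant with integrand $(e^{4\pi a^{-1}|u|^2}-1)^a$) before rescaling, whereas you apply it to $\ph$ first and then invoke only the $a=1$ case; the two orderings are interchangeable.
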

\begin{proof}
By \eqref{esti-TM-ineq} we have
\EQ{\label{eq:TMvar}
\sup_{u:\ \norm{\nabla u}_2^2+\norm{u}_2^2\leq 1}\int_{\R^2}(e^{4\pi a^{-1}|u|^2}-1)^adx\leq C
}
Fix $\ph\in H^1$ such that $\norm{\nabla \ph}_2<\sqrt{4\pi a^{-1}}$. Let $\ph_\lambda=\ph(\lambda x)$. Then
\EQ{
\norm{\nabla \ph_\lambda}_2^2+\norm{\ph_\lambda}_2^2=\norm{\nabla \ph}_2^2+\lambda^{-2}\norm{\ph}_2^2. 
}
Choose $\lambda>0$ such that $\norm{\nabla \ph_\lambda}_2^2+\norm{\ph_\lambda}_2^2=4\pi a^{-1}$. Then applying \eqref{eq:TMvar} with $u=\frac{\ph_\lambda}{\sqrt{4\pi a^{-1}}}$ we get
	\EQn{
		\int_{\R^2}\brk{e^{\abs{\ph(\lambda x)}^2}-1}^a\dx\leq C
	}
which implies 
	\EQn{
		\int_{\R^2}\brk{e^{\abs{\ph(x)}^2}-1}^a\dx\lsm \lambda^2=\frac{\norm{\ph}_2^2}{4\pi a^{-1}-\norm{\nabla\ph}_2^2}.
	}
\end{proof}
We also recall the radial Strichartz estimate for Klein-Gordon equation:
\begin{prop}[\cite{GHN18CMP}]
Suppose that $\ph\in L^2$ is radial, $d\goe2$, $2\loe q, r \loe +\infty$ and $\rev q < (d-\half 1)\brk{\half 1-\rev r}$.
	If $k\goe 0$, we have
	\EQ{\label{eq:radialstrichartz}
	\normo{e^{it\jb{D}}\pk\ph}_{L_t^q L_x^r} \lsm 2^{-\be(q,r)k} \normo{\pk\ph}_{2},
	}
	where
	\EQ{
    \beta(q,r)=\CAS{
    \frac{d}{2} -1 -\frac{1}{q}-\frac{d-2}{r}, \quad (d-1)\brk{\half 1-\rev r} < \rev q < (d-\half 1)\brk{\half 1-\rev r};\\
    \frac{1}{q}+\frac{d}{r}-\frac{d}{2}, \quad \rev q<(d-1)\brk{\half 1-\rev r},}
    }
	and if $k\loe 0$,
	\EQ{
	\normo{e^{it\jb{D}}\pk\ph}_{L_t^q L_x^r} \lsm 2^{\brk{\frac{2}{q}+\frac{d}{r}-\frac{d}{2}}k} \norm{\pk\ph}_{2}.
	}
Here $P_k$ is the Littlewood-Paley projector to the frequency of the size $\sim 2^k$.
\end{prop}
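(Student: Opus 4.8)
The plan is to establish the estimate one frequency block at a time and then exploit the Littlewood-Paley structure. By rescaling $\xi\mapsto 2^k\xi$ I would first reduce matters to a unit-frequency piece, so that the whole problem reduces to a single dispersive estimate for $e^{it\jb{D}}P_0$ together with its interaction with the radial symmetry; the powers of $2^k$ in the statement are then simply bookkeeping of how the dispersion relation $\jb{\xi}=\sqrt{1+\abs{\xi}^2}$ scales in the two regimes. For $k\loe0$ one has $\abs{\xi}\sim 2^k\loe1$, so $\jb{\xi}=1+\half1\abs{\xi}^2+O(\abs{\xi}^4)$ and, after factoring out the harmless phase $e^{it}$, the propagator behaves like the Schr\"odinger group at frequency $2^k$; this regime is governed by the Schr\"odinger dispersive bound and produces the stated exponent $\frac2q+\frac dr-\frac d2$. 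For $k\goe0$ one has $\jb{\xi}\sim\abs{\xi}$ and the operator is a perturbation of the half-wave propagator $e^{it\sqrt{-\De}}$, which is where the genuinely new radial gain appears.

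For the radial gain I would pass to the Fourier-Bessel representation: since $\ph$ is radial, $e^{it\jb{D}}\pk\ph$ is radial and its profile is an oscillatory integral against the Bessel function $J_{(d-2)/2}$. Inserting the large-argument asymptotics $J_\nu(s)\sim s^{-1/2}\cos(s-c_\nu)$ converts the $d$-dimensional radial integral into an essentially one-dimensional oscillatory integral carrying an extra weight $\abs{x}^{-(d-1)/2}$; this weight is precisely the source of the improved range $\rev q<(d-\half1)\brk{\half1-\rev r}$, which is strictly wider than the classical non-radial wave-admissible range, the enlargement reflecting the $\half{d-1}$ orders of extra spatial decay carried by radial profiles. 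A stationary-phase analysis of the resulting one-dimensional phase then produces, in the high-frequency regime, the kernel bound whose temporal decay, combined with the radial weight, yields the block dispersive estimate. The two sub-cases of $\be(q,r)$ for $k\goe0$ correspond to whether $(q,r)$ lies in the strip $(d-1)\brk{\half1-\rev r}<\rev q<(d-\half1)\brk{\half1-\rev r}$, where the radial weight is the dominant mechanism, or in the range $\rev q<(d-1)\brk{\half1-\rev r}$, where the ordinary dispersive decay already suffices.

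With the frequency-localized dispersive estimate in hand, I would run the usual $TT^*$ and Hardy-Littlewood-Sobolev machinery of Keel-Tao, but fed with the radially improved kernel, to pass from the pointwise kernel bound to the space-time norm $L_t^qL_x^r$; the strict inequality $\rev q<(d-\half1)\brk{\half1-\rev r}$ is exactly what makes the fractional-integration exponent summable and removes the need for an endpoint argument. Summing the single-block bounds over $k$ is then immediate, since the weights $2^{\mp\be(q,r)k}$ have a definite sign in each regime and no logarithmic loss occurs away from the endpoint.

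The hard part will be the stationary-phase analysis in the high-frequency regime near the transition where the stationary point of the reduced one-dimensional phase approaches the boundary of the Bessel asymptotic regime, that is where $\abs{x}\sim t$; there the cosine asymptotics for $J_\nu$ degenerate and one must patch the near-diagonal contribution, handled by a direct $L^2$-based bound, to the far oscillatory contribution so that the sharp radial weight $\abs{x}^{-(d-1)/2}$ survives uniformly. Controlling this junction cleanly, rather than the bookkeeping of scaling and summation, is what determines the sharp constant $(d-\half1)$ in the admissibility condition.
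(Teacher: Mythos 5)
The paper does not prove this proposition: it is quoted from \cite{GHN18CMP}, so there is no internal argument to compare against. Your outline is the standard route to radial Strichartz estimates (Fourier--Bessel representation, the $\abs{x}^{-(d-1)/2}$ gain from the large-argument asymptotics of $J_{(d-2)/2}$, then $TT^*$ plus Hardy--Littlewood--Sobolev off the endpoint), and it is consistent with how such results are obtained in the cited reference and its antecedents. But as written it is a program rather than a proof, and two of its load-bearing claims need repair.

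First, the ``rescaling to a unit-frequency piece'' is not a genuine reduction: $\jb{\xi}$ is not homogeneous, so after $\xi\mapsto2^k\xi$ you face the one-parameter family of phases $2^k\sqrt{2^{-2k}+\abs{\xi}^2}$ on $\abs{\xi}\sim1$, whose radial principal curvature degenerates like $2^{-2k}$. Proving the block estimate uniformly in that parameter, and tracking how the degenerate radial curvature trades against the $d-1$ nondegenerate angular ones, is exactly what produces the two expressions for $\be(q,r)$ when $k\goe0$; it is not bookkeeping. Second, your attribution of the sub-case $\rev q<(d-1)\brk{\half 1-\rev r}$ to ``ordinary dispersive decay'' is wrong for $d\goe3$: the classical non-radial wave-admissible range is only $\rev q\loe\half{d-1}\brk{\half 1-\rev r}$, and $(d-1)\brk{\half 1-\rev r}$ also exceeds the Schr\"odinger-admissible threshold $\half d\brk{\half 1-\rev r}$ once $d>2$, so the radial hypothesis is still doing work there; your reading happens to be correct only in the case $d=2$ actually used in this paper. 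Finally, the entire argument rests on a frequency-localized kernel bound you never state: the $TT^*$ kernel in the radial variables $r=\abs{x}$, $s=\abs{y}$ must be shown to obey $(rs)^{-(d-1)/2}$ times a one-dimensional dispersive factor \emph{uniformly across} the Bessel transition region $\abs{x}\sim t$ that you flag as the hard part. That estimate, not the HLS summation, is where the constant $d-\half 1$ is earned, and until it is written down the proposal does not yet constitute a proof.
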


In the focusing case, we will need some variational results. For the proof see \cite{ibrahim2011scattering}. 
\begin{thm}[\cite{ibrahim2011scattering}]
	Suppose that  $(\al_1,\be_1)\in\R^2$ satisfies \eqref{range-al-be}. Then, 
	\enu{
		\item 
		\EQ{
		m_{\al_1,\be_1}  < 2\pi/\ka_0.
		}
		\item (Variational estimate)
		We assume futher $\al_1\ne 0$. If $J(\ph) < m_{\al_1,\be_1}$ and $\kab(\ph)\goe 0$, we have 
		\EQ{
		\kab(\ph)\goe\min\fbrk{C\brk{m_{\al_1,\be_1}-J(\ph)},C\kab^{\text{Q}}(\ph)}.
		}
		\item 
		Define
		\EQ{
		\K_{\al_1,\be_1}^+:=\fbrk{\ph\in H^1: J(\ph)<m_{\al_1,\be_1}, K_{\al_1,\be_1}(\ph)\goe0},
		}
		and
		\EQ{
			\K_{\al_1,\be_1}^-:=\fbrk{\ph\in H^1: J(\ph)<m_{\al_1,\be_1}, K_{\al_1,\be_1}(\ph)<0}.
		}
		We have that $\K^\pm_{\al_1,\be_1}$ is independent of the choice of $(\al_1,\be_1)$, so we denote it by $\K^{\pm}$. Furthermore, $\K^+$ is a connected and open subset of $\fbrk{J(\ph)<m_{\al_1,\be_1}}$, and $0\in\K^+$.
		\item (Free energy equivalence)
		If $\ph\in \K^+$, we have that
		\EQ{
		J(\ph) \loe \half 1 \norm{\ph}_{H^1}^2 \loe \brk{1+\half d}J(\ph).
		}
	}
\end{thm}
Similar as the power type case, define $G(u)=\Re\brk{\wb{u}f(u)-F(u)}$, and we have 
\EQ{
K_{d/2,-1}(u) = \norm{\nabla u}_2^2 -\half d \int G(u) \dx.
}
We denote it by $K(u)$. Recall from the definition \eqref{assume-nl-term} and \eqref{assume-potential-flow}, we have
\EQn{\label{eq:defn-G-exponential}
	G(u) = \la \rev{\ka_0} \brk{e^{\ka_0 |u|^2}\brk{ \ka_0\abs{u}^2-1}-\half 1 \ka_0^2\abs{u}^4}.
}

Combining with the local well-posedness, the above theorem yields global well-posedness and lower bound of virial functional for focusing problems. 
Now we state the variational result for 2D exponential focusing NLKG:
\begin{thm}[\cite{ibrahim2011scattering}]
	We assume that $\la=1$, and $(\al_1,\be_1)\in\R^2$ satisfies \eqref{range-al-be}. If  $\ek(u_0,u_1)<m_{\al_1,\be_1}$ and $\kab(u_0)\goe 0$, the solution of \eqref{equ-main-kg} exists globally, i.e. $u(t,x)\in C\brk{\R:H^1(\R^2)}\cap C^1\brk{\R:L^2(\R^2)}$. Particularly, we have $\sup_{t\in I} \norm{\nabla u}_2^2 < 4\pi/\ka_0$ and
	\EQ{
		K(u(t)) \goe C\norm{\nabla u(t)}_2^2,
	}
	for all $t\in \R$.
\end{thm}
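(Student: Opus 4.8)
The plan is to extract uniform-in-time bounds from the variational structure of the preceding theorem together with conservation of $\ek$, and then to bootstrap the local theory into a global solution. I would proceed in the order: trapping of the flow in $\K^+$, a sharp kinetic bound, global existence, and finally coercivity of $K$. Since $\ek(u,\ut)=J(u)+\half1\norm{\ut}_2^2\goe J(u)$, conservation of energy and the hypothesis $\ek(u_0,u_1)<m_{\al_1,\be_1}$ yield $J(u(t))\loe\ek(u_0,u_1)<m_{\al_1,\be_1}$ throughout the maximal existence interval $I$; in particular $J(u_0)<m_{\al_1,\be_1}$, which with $\kab(u_0)>0$ places $u_0\in\K^+$. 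I would then run the usual continuity argument: within $\fbrk{J<m_{\al_1,\be_1}}$ one has the disjoint union $\K^+\sqcup\K^-$ of relatively open sets (the latter open by continuity of $J$ and $K$), so $\K^+$ is clopen and, being connected and containing $0$, is the connected component of $u_0$. As $t\mapsto u(t)$ is a continuous curve in $\fbrk{J<m_{\al_1,\be_1}}$ starting in $\K^+$, it stays there, i.e.\ $u(t)\in\K^+$ for all $t\in I$.

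Next I would establish the sharp kinetic bound. Using that $\K^\pm$ is independent of $(\al_1,\be_1)$, I test the membership $u(t)\in\K^+$ against the admissible pair $(0,1)$, which satisfies \eqref{range-al-be} for $d=2$; its functional is $K_{0,1}(u)=\norm{u}_2^2-\int F(u)\dx$, so $K_{0,1}(u(t))\goe0$ gives $\int F(u(t))\dx\loe\norm{u(t)}_2^2$ and hence $J(u(t))\goe\half1\norm{\nabla u(t)}_2^2$. Combined with $J(u(t))<m_{\al_1,\be_1}\loe 2\pi/\ka_0$ this produces the required strict bound $\norm{\nabla u(t)}_2^2<4\pi/\ka_0$, and with a uniform gap because $\sup_{t\in I}J(u(t))\loe\ek(u_0,u_1)<m_{\al_1,\be_1}$. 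This uniform gap is exactly what makes the Trudinger--Moser inequality (Corollary \ref{cor:Labound}) control the exponential nonlinearity with constants independent of $t$. Feeding this into the Strichartz-based local well-posedness theory keeps the local existence time bounded below uniformly, so the blow-up alternative forces $I=\R$ and $u\in C(\R:H^1)\cap C^1(\R:L^2)$.

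For the coercivity $K(u(t))\goe C\norm{\nabla u(t)}_2^2$ I would invoke the variational estimate (part (b) of the preceding theorem) with the virial pair $(1,-1)$, which is admissible with $\al_1\ne0$ and satisfies $\kab^{\text{Q}}(u)=\norm{\nabla u}_2^2$ for $d=2$; the case $u(t)=0$ is trivial, and otherwise $\kab(u(t))>0$ strictly since within $\fbrk{J<m_{\al_1,\be_1}}$ one has $\kab=0$ only at the origin. It gives $K(u(t))\goe C\min\fbrk{m_{\al_1,\be_1}-J(u(t)),\ \norm{\nabla u(t)}_2^2}$. The uniform gap $m_{\al_1,\be_1}-J(u(t))\goe m_{\al_1,\be_1}-\ek(u_0,u_1)>0$ together with the uniform upper bound $\norm{\nabla u(t)}_2^2<4\pi/\ka_0$ shows that the first entry already dominates a fixed multiple of $\norm{\nabla u(t)}_2^2$, so the minimum is $\gsm\norm{\nabla u(t)}_2^2$ and the claim follows.

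I expect the main obstacle to be precisely the preservation of the strict inequality $\norm{\nabla u(t)}_2^2<4\pi/\ka_0$ with a time-independent gap. This is the sharp threshold at which Trudinger--Moser controls the exponential nonlinearity, so the entire global theory collapses if the variational argument only closes at the endpoint $4\pi/\ka_0$; the delicate point is that the bound must be read off from the strict energy gap $\ek(u_0,u_1)<m_{\al_1,\be_1}\loe 2\pi/\ka_0$ via the $(0,1)$ functional, rather than from the softer free-energy equivalence, which by itself would only give $\norm{\nabla u(t)}_2^2<8\pi/\ka_0$.
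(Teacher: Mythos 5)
Your proposal is correct and follows essentially the same route as the paper, which cites this NLKG statement from \cite{ibrahim2011scattering} and writes out the argument only for the NLS analogue: trapping in $\K^+$ by continuity, the sharp bound $\norm{\nabla u}_2^2<4\pi/\ka_0$ via the functional $K_{0,1}$ combined with $J(u(t))<m_{\al_1,\be_1}\loe 2\pi/\ka_0$, global existence from the local theory, and coercivity of $K=K_{1,-1}$ from the variational estimate with the uniform gap $m_{\al_1,\be_1}-J(u(t))$. Your filled-in details (the computation of $K_{0,1}$, the admissibility checks, and the comparison making the minimum in the variational estimate $\gsm\norm{\nabla u}_2^2$) match the paper's intent.
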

The proof for NLS is almost the same as NLKG case, and we give the details for completeness.
\begin{thm}
	We assume that $\la=1$, and $(\al_1,\be_1)\in\R^2$ satisfies \eqref{range-al-be}. If $\es(u_0) + M(u_0)/2<m_{\al_1,\be_1}$ and $\kab(u_0)\goe 0$, the solution of \eqref{equ-main-nls} exists globally, i.e. $u(t,x)\in C\brk{\R:H^1(\R^d)}$. In particular, we have $\sup_{t\in \R} \norm{\nabla u}_2^2 < 4\pi/\ka_0$ and
	\EQ{
	K(u(t)) \goe C\norm{\nabla u(t)}_2^2,
	}
	for all $t\in \R$.
\end{thm}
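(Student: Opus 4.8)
The plan is to mirror the power-type global well-posedness theorem (the analog of the results of \cite{AN10Kyoto, ibrahim2011scattering}) while controlling the exponential nonlinearity through the Trudinger--Moser inequality in Corollary~\ref{cor:Labound}, which is only available in the subcritical r\'egime $\ka_0\norm{\nabla u}_2^2<4\pi$. The starting observation is the algebraic identity $J(\ph)=\es(\ph)+\half1 M(\ph)$ (for $c=1$), so that conservation of energy and mass for \eqref{equ-main-nls} gives $J(u(t))=\es(u_0)+M(u_0)/2<m_{\al_1,\be_1}$ for every $t$ in the maximal interval of existence. Thus the whole trajectory lies in the sublevel set $\fbrk{J<m_{\al_1,\be_1}}$.

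First I would run a topological trapping argument to show $u(t)\in\K^+$ for all $t$. By the cited variational results, $\fbrk{J<m_{\al_1,\be_1}}$ splits as the disjoint union of the two relatively open sets $\K^+$ and $\K^-$ (the sign of $\kab$ cannot vanish on $\fbrk{J<m_{\al_1,\be_1}}\setminus\fbrk0$, since $\kab(\ph)=0$ with $\ph\ne0$ would force $J(\ph)\goe m_{\al_1,\be_1}$ by the definition of $m_{\al_1,\be_1}$). Since $t\mapsto u(t)$ is continuous into $H^1$ on the connected interval of existence and $u_0\in\K^+$ (from the hypotheses $J(u_0)<m_{\al_1,\be_1}$ and $\kab(u_0)>0$), the preimages of $\K^+$ and $\K^-$ are open, partition the interval, and the one containing the initial time is nonempty; connectedness then forces $u(t)\in\K^+$ throughout. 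In particular $K(u(t))=K_{d/2,-1}(u(t))\goe0$ and $J(u(t))<m_{\al_1,\be_1}$, and, as in the variational package of \cite{ibrahim2011scattering}, membership in $\K^+$ keeps the trajectory in the subcritical region $\ka_0\norm{\nabla u(t)}_2^2<4\pi$ where the energy and the functionals are well defined.

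Next, the free energy equivalence gives $\half1\norm{u(t)}_{H^1}^2\loe\brk{1+\half d}J(u(t))<\brk{1+\half d}m_{\al_1,\be_1}$, a uniform-in-time $H^1$ bound. Together with the subcritical gradient bound this controls the exponential nonlinearity via Corollary~\ref{cor:Labound}, so the local solution cannot leave $H^1$ in finite time; hence it extends globally and $u\in C\brk{\R:H^1(\R^2)}$. Finally, for the virial functional $(\al_1,\be_1)=(d/2,-1)$ one computes $2\al_1+(d-2)\be_1=2$ and $2\al_1+d\be_1=0$, so $K^{\text{Q}}(u)=\norm{\nabla u}_2^2$; the variational estimate then yields $K(u(t))\goe C\min\fbrk{m_{\al_1,\be_1}-J(u(t)),\norm{\nabla u(t)}_2^2}$. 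Since $m_{\al_1,\be_1}-J(u(t))=m_{\al_1,\be_1}-J(u_0)$ is a fixed positive gap and $\norm{\nabla u(t)}_2^2$ is uniformly bounded, both alternatives are bounded below by a fixed multiple of $\norm{\nabla u(t)}_2^2$, which gives $K(u(t))\goe C\norm{\nabla u(t)}_2^2$ for all $t$.

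The main obstacle is the trapping step in the exponential setting: unlike the power case, one must ensure a priori that the solution never reaches the critical threshold $\ka_0\norm{\nabla u}_2^2=4\pi$, beyond which $\int F(u)\dx$ and the variational functionals may fail to be defined and the Trudinger--Moser control breaks down. Making this rigorous relies on the subcritical local well-posedness theory for the exponential NLS (the criticality threshold discussed in \cite{CIMM}, consistent with $\es(u_0)<m_{\al_1,\be_1}\loe 2\pi/\ka_0$ here) together with the fact, from the variational analysis of \cite{ibrahim2011scattering}, that $\K^+$ sits strictly inside the subcritical ball; the continuity-plus-connectedness argument then confines $u(t)$ to $\K^+$ and prevents it from ever approaching the threshold.
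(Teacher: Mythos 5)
Your proposal is correct and follows essentially the same route as the paper: conservation laws place the orbit in $\fbrk{J<m_{\al_1,\be_1}}$, the openness/connectedness of $\K^{\pm}$ traps it in $\K^+$, free energy equivalence plus the subcritical gradient bound give global existence, and the variational estimate with $K^{\text{Q}}(u)=\norm{\nabla u}_2^2$ (together with the fixed gap $m_{\al_1,\be_1}-J(u_0)$) yields the coercivity $K(u(t))\gtrsim\norm{\nabla u(t)}_2^2$. The only point you leave to citation that the paper makes explicit is the bound $\sup_t\norm{\nabla u(t)}_2^2<4\pi/\ka_0$: this does not follow from free energy equivalence alone (which only gives $\norm{u}_{H^1}^2<2\brk{1+d/2}m_{\al_1,\be_1}\loe 8\pi/\ka_0$), but is obtained by using the functional $K_{0,1}$, since $u(t)\in\K^+=\K^+_{0,1}$ gives $\int F(u)\,dx\loe\norm{u}_2^2$ and hence $\tfrac12\norm{\nabla u}_2^2\loe J(u(t))<m_{\al_1,\be_1}\loe 2\pi/\ka_0$.
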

\begin{proof}
    Suppose that $u(t)$ is the solution of \eqref{equ-main-nls} with maximal existence interval $I$. By conservation law, $J(u(t))=\es(u(t)) + M(u(t))/2<m_{\al_1,\be_1}$. If $K(u(t^*))=0$ for some $t^*\in I$, we have $u(t^*)=0\in\K^+$. Since $\K^+$ is an open set and $u(t)\in C\brk{I:H^1(\R^d)}$, $u(t)\in\K^+$ near $t^*$. Therefore, $u(t)\in\K^+$ for all $t\in I$. By free energy equivalence, $J(u(t))\sim \norm{u(t)}_{H^1}$. We then have $\sup_{t\in I} \norm{\nabla u}_2^2 < 4\pi/\ka_0$ using the functional $K_{0,1}$. Combining with local theory, we have that $I=\R$. Furthermore, $\norm{\nabla u}_2^2\loe C J(u(t)) = C J(u_0) \loe C\brk{ m_{\al_1,\be_1}-J(u_0)}$. By variation estimate, $K(u(t))\goe C \norm{\nabla u}_2^2$.
\end{proof}

\subsection{Virial-Morawetz estimates}
Next, we derive the key virial-Morawetz estimate. From now on, the constants may depend on $\ka_0$, the energy $\es$ or $\ek$ and mass $M(u)$, namely $C=C(\ka_0,\ek(u_0,u_1))$ or $C(\ka_0,\es(u_0),M(u_0))$.
\begin{prop}\label{prop-vi-mora-expo}
Assume $d=2$, $(\al_1,\be_1)\in\eqref{range-al-be}$, $(u_0,u_1)\in H^1\times L^2$, radial, and $f(u) = \la \brk{e^{\ka_0\abs{u}^2}-1-\ka_0\abs{u}^2}u$.
    If $\la = -1$, we assume  $\es(u_0)<2\pi/\ka_0$ for NLS and $\ek(u_0,u_1)<2\pi/\ka_0$ for NLKG; if $\la=1$, we assume 
	\EQ{
	\ncase{
	& \es(u_0) + M(u_0)/2 <m_{\al_1,\be_1},
	& \text{NLS case,} \\
	& \ek(u_0,u_1) <m_{\al_1,\be_1},
	& \text{NLKG case,} \\
}
}
and $K(u_0)\goe 0$. 
Let $u\in C\brk{\R:H^1}$ be the global radial solution of \eqref{equ-main-nls} or \eqref{equ-main-kg}.
	
    Then, for any $R>0$ and $T_2>T_1>0$,
	\EQn{
		\int_{T_1}^{T_2}\int \aabs{G(u)}\dx\dt \lsm 
		R + \brk{T_2-T_1}R^{-2}.
	}
	Furthermore, for any $\de>0$ and $T>0$, we have
	\EQn{\label{esti-vi-mora-expo}
		\int_T^\I t^{-\rev3-\de}\int \aabs{G(u)}\dx\dt \lsm T^{-\de},
	}
\end{prop}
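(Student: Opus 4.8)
The plan is to follow the proof of Proposition~\ref{prop-vi-mora-power} essentially verbatim, since the Morawetz machinery of Lemma~\ref{lem-morawetz-identity} is insensitive to the precise form of the nonlinearity; only the estimates on $F$ and $G$ must be redone with the Trudinger--Moser inequality (Corollary~\ref{cor:Labound}) in place of the Sobolev embedding. First I would take the same multiplier: with $\chir(r)=\chi(R^{-1}r)$, $\ph(r)=\int_0^r\chir^2(s)\ds$, $h(x)=\ph(\abs{x})x/\abs{x}$ and $q=\dive h/2$, and define $M(t)$ exactly as before in both the NLS and NLKG cases. The Morawetz identity then produces the identical formula for $\pdt M(t)$, which I specialise to $d=2$: the angular terms $\abs{\nabla u}^2-\abs{\pdr u}^2$ vanish by radiality, and the term with $\De\dive h$ contributes $O(R^{-2}\norm{u}_2^2)$. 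The work specific to the exponential nonlinearity is confined to (i) the tail remainders and (ii) the lower bound on the main term.

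For the tail remainders I would use the radial Sobolev inequality, which in $d=2$ gives $\abs{u(x)}\lsm\abs{x}^{-1/2}\norm{u}_{H^1}$, so that $\abs{u}\lsm R^{-1/2}$ on $\abs{x}\goe R$. Since $F(u)$ and $G(u)$ both vanish to order $\abs{u}^6$ as $u\to0$, on the tail the exponential factor is uniformly bounded and one gets $\int_{\abs{x}\goe R}\abs{G(u)}\dx\lsm R^{-2}\norm{u}_2^2$, together with the analogues of the remainder bounds \eqref{esti-vimo-power-remainder-1}--\eqref{esti-vimo-power-remainder-3}. The finiteness of all these integrals, as well as the uniform bound $\int\abs{G(u)}\lsm1$ used below, is furnished by Corollary~\ref{cor:Labound} via the a priori estimate $\ka_0\norm{\nabla u}_2^2<4\pi$; this holds in the defocusing case because $\es\goe\half1\norm{\nabla u}_2^2$ when $\la=-1$ (similarly $\ek\goe\half1\norm{\nabla u}_2^2$ for NLKG), and in the focusing case by the variational theorems for the exponential nonlinearity stated above. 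This is why the power $R^{-\min\{(d-1)p/2,2\}}$ of Proposition~\ref{prop-vi-mora-power} collapses to the clean $R^{-2}$: the exponential tail decays faster than any power, so the binding constraint is the $R^{-2}$ coming from $\De\dive h$.

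For the main term $\int\chir^2(\abs{\nabla u}^2-G(u))\dx$ (recall $d/2=1$) I would rewrite it as $K(\chir u)$ plus $O(R^{-2})$ remainders and then supply $K(\chir u(t))\gsm\int\abs{G(\chir u(t))}\dx$. When $\la=-1$ this is immediate since $G(\chir u)\loe0$. When $\la=1$ I would argue as in Proposition~\ref{prop-vi-mora-power}: starting from the energy comparison $J(\chir u(t))\loe J(u(t))+CR^{-2}$ (the tail of $\int F$ being controlled by Corollary~\ref{cor:Labound}), I fix $R_0$ so that $\sup_t J(\chir u(t))<m_{\al_1,\be_1}$ for $R>R_0$, conclude $\chir u(t)\in\K^+$ from the openness and connectedness of $\K^+$, and invoke the variational estimate $K(\chir u)\gsm\min\{C(m_{\al_1,\be_1}-\sup_t J(\chir u)),\,C\,K^{\text{Q}}(\chir u)\}$. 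Distinguishing the cases $K^{\text{Q}}(\chir u)\goe C$ and $K^{\text{Q}}(\chir u)\loe C$ exactly as before, and using $\int\abs{G(\chir u)}\lsm1$ in the former, yields the desired lower bound.

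Collecting these estimates, integrating over $[T_1,T_2]$, and transferring back from $\chir u$ to $u$ through the tail bound $\int\abs{G(u)-G(\chir u)}\dx\lsm R^{-2}$ gives the first inequality $\int_{T_1}^{T_2}\int\abs{G(u)}\lsm R+(T_2-T_1)R^{-2}$. Finally, to deduce \eqref{esti-vi-mora-expo} I run the dyadic argument on $[2^kT,2^{k+1}T]$ and optimise in $R$: balancing $R$ against $(T_2-T_1)R^{-2}$ forces $R\sim(2^kT)^{1/3}$, hence $\int_{2^kT}^{2^{k+1}T}\int\abs{G(u)}\lsm(2^kT)^{1/3}$, and summing $\int t^{-1/3-\de}\int\abs{G(u)}\lsm\sum_k(2^kT)^{-\de}\lsm T^{-\de}$. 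The exponent $\rev3$ is precisely $\max\{2/(2+(d-1)p),\rev3\}$ in the limit $p\to\I$, consistent with reading the exponential nonlinearity as an arbitrarily high power. The single delicate point---the main obstacle---is ensuring the Trudinger--Moser threshold $\ka_0\norm{\nabla(\chir u(t))}_2^2<4\pi$ survives the cutoff uniformly in $t$ and $R$; since $\norm{\nabla(\chir u)}_2\loe\norm{\nabla u}_2+CR^{-1}\norm{u}_2$ and the uncut solution obeys a strict inequality with a gap, this holds for all $R$ large, but it is the one place where the argument genuinely uses the structure of the exponential problem.
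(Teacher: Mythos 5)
Your proposal is correct and follows essentially the same route as the paper: the same cutoff multiplier and Morawetz identity, the same variational argument for the main term in the focusing case, Trudinger--Moser for the small-$R$ regime, and the same dyadic optimisation $R\sim(2^kT)^{1/3}$. The only cosmetic difference is in the tail remainders, where you bound $G(u)\lesssim |u|^6 e^{\ka_0|u|^2}$ and use the pointwise radial Sobolev decay once, while the paper applies radial Sobolev term-by-term to the Taylor series of $G$ and sums; these are equivalent.
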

\begin{proof}
We take $\chi(r)$,  $\chir(r)$, $\ph(r)$, $h(x)$, $q(x)$ and $M(t)$ as in power type case. 
It is easy to see that $M(t)\lsm R$.
Similar as before, by Morawetz identity in Lemma \ref{lem-morawetz-identity}, 
we have that
\EQ{
	\pdt M(t) = & \int \ph'(\abs{x}) \brk{\abs{\nabla u}^2-\frac{d}{2}G(u)}\dx \\
	& -\rev 4 \int \brk{\ph''' + \frac{2(d-1)}{\abs{x}}\ph''}\abs{u}^2\dx \\
	& + \int \brk{\frac{\ph(\abs{x})}{\abs{x}} -\ph'(\abs{x})} \brk{\abs{\nabla u}^2-\abs{\pdr u}^2}\dx \\
	& + \int \brk{\frac{\ph(\abs{x})}{\abs{x}} -\ph'(\abs{x})} \brk{\frac{(d-1)(d-3)}{4\abs{x}^2}\abs{u}^2- \half{d-1}G(u)}\dx,
}
and then 
\EQ{
\pdt M(t) = & \int \ph'(\abs{x}) \brk{\abs{\nabla u}^2-\frac{d}{2}G(u)}\dx \\
& - \half{d-1} \int \brk{\frac{\ph(\abs{x})}{\abs{x}} -\ph'(\abs{x})} G(u)\dx + O\brk{\rev{R^2}}.
}

When $|x|\goe R$, by radial Sobolev inequality $|u|\loe CR^{-1}$, we have $|e^{\ka_0|u|^2}-1-\ka_0|u|^2|\lsm \ka_0^2 |u|^4$. Then, we have
\EQn{\label{eq:pointwisebound-exponential-remainder}
\abs{\pd_u G(u)}+ \abs{\pd_{\wb{u}} G(u)} +\abs{F(u)} + \abs{G(u)}\lsm \rev{R^2} \abs{u}^2.
}
Then, we have
\EQnn{
	\aabs{\int\brk{ G(u)-G(\chir u) }\dx} \lsm R^{-2}, \label{esti-vimo-expo-remainder-1}\\
	\int\brk{1-\chir^2}\aabs{G(u)}\dx \lsm R^{-2}, \label{esti-vimo-expo-remainder-2}
}
and
\EQn{
\int\aabs{F(u)-F(\chir u)}\dx \lsm R^{-2}.\label{esti-vimo-expo-remainder3}
}
Therefore, the same as in power type case, by \eqref{esti-vimo-expo-remainder-1} and \eqref{esti-vimo-expo-remainder-2},
\EQ{
	\pdt M(t) = \int \brk{\abs{\nabla \brk{\chir u}}^2 -\half d G(\chir u) }\dx + O\brk{\rev{R^2}}.
}
When $\la=-1$, we have 
\EQn{
	\int \brk{\abs{\nabla \brk{\chir u}}^2 -\half d G(\chir u) }\dx \gsm \int \aabs{G(\chir u) }\dx
}
When $\la=1$, using the same argument in power case and \eqref{esti-vimo-expo-remainder3}, we obtain that there exists $R_0>0$ depending on the energy and mass of initial data, such that $\chir u(t)$ satisfies $\sup_t J(\chir u(t))<m_{\al_1,\be_1}$, for any $R>R_0$. Moreover, we have that $K(\chir u(t))\goe 0$ for all $t\in\R$ and $R>R_0$, and by variation estimate, 
\EQn{
	K(\chir u(t)) \gsm \int G(\chir u(t)) \dx,
}
where the implicit constant is independent of time $t$.

Combine the above estimates, and integrate on the interval $[T_1,T_2]$, then we get
\EQ{
	\int_{T_1}^{T_2}\int \aabs{G(\chir u)}\dx\dt \lsm R + \brk{T_2-T_1}R^{-2}.
}
In fact, the above inequality also holds for $R<R_0$, with the implicit constant depending on the conserved quantities. Indeed, it follows from the variational result that
\EQ{
    \sup_{t\in I} \norm{\nabla u}_2^2 < \frac{4\pi}{\ka_0},
} 
then using Trudinger-Moser inequality \eqref{esti-TM-ineq} with $\ph=\sqrt{\ka_0} u(t)$,
\EQ{
\int\aabs{\chir G(u(t))}\dx \loe 
C \int \brk{e^{\ka_0 \aabs{u(t)}^2}-1} \dx \loe C(\norm{u}_{L_t^\I H_x^1}).
}
Therefore, recall that $R_0$ is a constant depending only on the energy and mass, then for $0<R<R_0$,
\EQ{
	\int_{T_1}^{T_2}\int \aabs{G(\chir u)}\dx\dt \lsm \brk{T_2-T_1} \lsm \brk{T_2-T_1}R^{-2}.
}
Using the estimate \eqref{eq:pointwisebound-exponential-remainder} for $|x|\goe R$, we have for any $0<T_1<T_2$ and $R>0$,
\EQ{
	\int_{T_1}^{T_2}\int \aabs{G(u)}\dx\dt \lsm R + \brk{T_2-T_1} R^{-2}.
}
Take $T_1=2^kT$, $T_2=2^{k+1}T$ for $k\in\N$ and $R= \brk{2^k T}^{1/3}$, then the proposition follows by the summation procedure similar to the power type case.
\end{proof}
Let $\de>0$ be sufficiently small. We define the exponents $\al:=1/3 + \de$, and $\be:=1/2 + \de$. We need a global bound of $\norm{f(u)}_{L_x^1}$:
\begin{cor}
	Let $d=2$. Under the same assumptions in Proposition \ref{prop-vi-mora-expo}, for any $T>0$, we have global bound
	\EQn{\label{esti-global-bound-expo}
		\int_T^\I t^{-\be}\int\abs{f(u)}\dx\dt \lsm T^{-\de}.
	}
\end{cor}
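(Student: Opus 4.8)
The plan is to reduce everything to the virial-Morawetz bound \eqref{esti-vi-mora-expo} by first establishing the pointwise comparison $\aabs{f(u)}\lsm\abs{u}^5+\aabs{G(u)}$. Recall from Proposition \ref{prop-vi-mora-expo} the expansions $\abs{f(u)}=\sum_{k\goe2}\frac{\ka_0^k}{k!}\abs{u}^{2k+1}$ and $\aabs{G(u)}=\sum_{k\goe2}\frac{\ka_0^k}{(k+1)(k-1)!}\abs{u}^{2k+2}$, both having non-negative terms. First I would split according to the size of $\abs{u}$: when $\abs{u}\loe1$ every power satisfies $\abs{u}^{2k+1}\loe\abs{u}^5$, so that part is $\lsm\abs{u}^5\sum_{k\goe2}\ka_0^k/k!\lsm\abs{u}^5$; when $\abs{u}\goe1$ one has $\abs{u}^{2k+1}\loe\abs{u}^{2k+2}$, and since $(k+1)/k\loe3/2$ for $k\goe2$ the resulting series is $\lsm\aabs{G(u)}$ term by term. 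This gives the claimed bound in both the focusing and defocusing cases, as $\la=\pm1$ is irrelevant for $\abs{f}$ and $\aabs{G}$.

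Next I would dispose of the $\abs{u}^5$ term, which is the crux: it sits one power below $\aabs{G(u)}\goe\frac{\ka_0^2}{6}\abs{u}^6$, so on its own it carries no time decay from the conservation laws. The key observation is that no spatial decomposition is needed here; a single Gagliardo-Nirenberg interpolation suffices. Writing $\norm{u}_5\loe\norm{u}_2^{1/10}\norm{u}_6^{9/10}$ and using the uniform mass (resp. $H^1$) bound $\norm{u}_2\lsm1$ together with the pointwise inequality $\aabs{G(u)}\goe\frac{\ka_0^2}{6}\abs{u}^6$, I obtain
\[
\int\abs{u}^5\dx=\norm{u}_5^5\lsm\norm{u}_6^{9/2}=\brk{\int\abs{u}^6\dx}^{3/4}\lsm\brk{\int\aabs{G(u)}\dx}^{3/4}.
\]
Combining this with the pointwise bound yields $\int\aabs{f(u)}\dx\lsm\int\aabs{G(u)}\dx+\brk{\int\aabs{G(u)}\dx}^{3/4}$, reducing the problem to controlling the time integrals of these two quantities against the weight $t^{-\be}$.

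Finally I would estimate the two contributions separately. For the linear term, since $\be=1/2+\de\goe1/3+\de$ I simply bound $t^{-\be}\loe t^{-1/3-\de}$ and invoke \eqref{esti-vi-mora-expo} directly, giving $\int_T^\I t^{-\be}\int\aabs{G(u)}\dx\dt\lsm T^{-\de}$. For the sublinear term I would apply H\"older in time with exponents $4$ and $4/3$, after distributing the weight as
\[
t^{-\be}\brk{\int\aabs{G(u)}\dx}^{3/4}=\mbrk{t^{-\be+\frac34(1/3+\de)}}\cdot\mbrk{t^{-1/3-\de}\int\aabs{G(u)}\dx}^{3/4};
\]
the second factor is controlled by \eqref{esti-vi-mora-expo} to the power $3/4$, giving $T^{-3\de/4}$, while the first factor leads to $\int_T^\I t^{4(-\be+\frac34(1/3+\de))}\dt=\int_T^\I t^{-1-\de}\dt\sim T^{-\de}$, whose fourth root is $T^{-\de/4}$, and the product is exactly $T^{-\de}$.

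I expect the main obstacle to be precisely this lower-order $\abs{u}^5$ piece. The exponent $\be=1/2+\de$ is tuned so that the time integral in the first H\"older factor is just barely convergent (indeed $4\be>2+3\de$ is exactly the condition needed), and the interpolation power $3/4$ is what makes the two H\"older factors combine to the required $T^{-\de}$ decay; a cruder treatment of the $\abs{u}^5$ term (e.g. using only $\int\abs{u}^5\lsm1$) fails because $\be<1$.
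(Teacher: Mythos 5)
Your proof is correct, but it takes a genuinely different route from the paper's. The paper never isolates a single bad term: it applies a termwise weighted arithmetic--geometric mean inequality to the whole Taylor series, writing each $|u|^{2k+1}$ as the $k$-dependent geometric mean $\brk{|u|^{2k+2}}^{\frac{2k-1}{2k}}\brk{|u|^{2}}^{\frac{1}{2k}}$, splitting the weight $t^{-\be}$ accordingly, applying H\"older in $t$ with exponents $\frac{2k}{2k-1}$ and $2k$, and then resumming; the output is $\int_T^\I t^{-\al}\int G\dx\dt+\int_T^\I t^{-1-\de}M(u)\dt\lsm T^{-\de}$. You instead collapse the entire series into the two-term pointwise bound $\abs{f(u)}\lsm\abs{u}^5+\aabs{G(u)}$ (splitting $\abs{u}\lessgtr1$), absorb the tail into $G$ once and for all, and treat the single remaining term $\abs{u}^5$ by Lebesgue interpolation against mass plus one H\"older in time with exponents $4$ and $4/3$. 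The two arguments use exactly the same inputs (mass conservation and \eqref{esti-vi-mora-expo}) and the same numerology --- your constraint $4\be-3\al\goe 1+\de$ is precisely the paper's binding case $k=2$ of $-\be+\al\frac{2k-1}{2k}+(1+\de)\rev{2k}\loe0$, which is what forces $\be=\half1+\de$ in both. Your version is more elementary and makes transparent that only the lowest-order piece of the nonlinearity is delicate; the paper's version is more systematic and generalizes mechanically to weights interpolating between $t^{-\al}$ and $t^{-1-\de}$ for every $k$ at once. Two cosmetic remarks: the $k=2$ coefficient of $G$ gives $\aabs{G(u)}\goe\frac{\ka_0^2}{3}\abs{u}^6$ (your $\frac{\ka_0^2}{6}$ is still a valid lower bound, just not sharp), and, like the paper's own argument, your step $t^{-\be}\loe t^{-\al}$ and the distribution of the weight implicitly use $t\goe1$, so the stated bound should be read for $T\goe1$ (or with the region $t\loe1$ handled by the crude bound $\norm{f(u)}_{L_x^1}\lsm1$); this caveat is shared by the paper and is not a gap specific to your proof.
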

\begin{proof}
From the definition \eqref{assume-nl-term} and \eqref{eq:defn-G-exponential}, when $|u|\gg 1$, we have $|f(u)|\lsm |u|e^{\ka_0 |u|^2}$, and $G(u)\gsm |u|^2e^{\ka_0|u|^2}$. Therefore, 
\EQn{
\lim_{u\ra0} \frac{|f(u)|}{|u|^5} =\half 1 \ka_0^2\text{ , and } \lim_{|u|\ra+\I} \frac{|f(u)|}{|G(u)|}=0.
}
Then, we have bound
\EQn{
|f(u)|\lsm |u|^5+|G(u)|.
}
By \eqref{esti-vi-mora-expo}, it remains to bound the $|u|^5$ term. By interpolation $\norm{u}_{L^5}\lsm \norm{u}_{L^6}^{9/10}\norm{u}_{L^2}^{1/10}$, \eqref{esti-vi-mora-expo} and H\"older inequality,
\EQ{
\int_{T}^\I t^{-\be}\norm{u(t)}_{L_x^5}^5 \dt \lsm & \int_{T}^\I t^{-\be}\norm{G(u(t))}_{L_x^1}^{\frac{3}{4}} \norm{u(t)}_{L_x^2}^{\half 1} \dt \\
\lsm  \brk{\int_T^\I t^{-\al}\norm{G(u(t))}_{L_x^1}  \dt}^{\frac34}&\brk{\int_T^\I t^{-1-\de}\norm{u(t)}_{L_x^2}^2\dt}^{\rev 4} \lsm T^{-\de}.
}
\end{proof}
\begin{cor}
	Let $d=2$. Suppose that all the assumptions in Proposition \ref{prop-vi-mora-expo} hold. Then for any $\ep>0$, $T>0$, there exists $T_0=T_0(\ep,T)>T$,
	\EQn{\label{esti-decay-large-time}
		\int_{T_0-10^{-1}T_0^{1-\al}}^{T_0} \int \aabs{G(u)}\dx\dt \lsm \ep.
	}
\end{cor}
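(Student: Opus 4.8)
The plan is to obtain this decay statement as an immediate consequence of the virial-Morawetz estimate \eqref{esti-vi-mora-expo} fed into the elementary decay Lemma \ref{lem:eledecay}. The key observation is that \eqref{esti-vi-mora-expo}, applied with $T=1$, provides exactly the integrability hypothesis that Lemma \ref{lem:eledecay} requires, so the corollary is really just a translation of the time-weighted bound into a statement about a short window near a well-chosen time.

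First I would set $g(t):=\int \aabs{G(u(t))}\dx$, which is non-negative and, for each $t$, finite because $u\in C(\R:H^1)$ together with the Trudinger-Moser bound used in Proposition \ref{prop-vi-mora-expo} controls $\int\aabs{G(u)}\dx$ by a constant. Recalling that $\al=1/3+\de$ with $\de>0$ sufficiently small, so that $\al\in(0,1)$, the estimate \eqref{esti-vi-mora-expo} with $T=1$ reads
\[
\int_1^\I \frac{g(t)}{t^{\al}}\dt=\int_1^\I t^{-\rev3-\de}\int \aabs{G(u)}\dx\dt \lsm 1 <\I,
\]
so $g$ meets the hypothesis of Lemma \ref{lem:eledecay} with exponent $\al$.

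Next I would invoke Lemma \ref{lem:eledecay} with this $g$, the given $\ep$, and $M=T$, producing a time $T_0>T$ with
\[
\int_{T_0-\frac{1}{2\al}T_0^{1-\al}}^{T_0} g(t)\dt < \ep.
\]
It then only remains to pass from the window of width $\frac{1}{2\al}$ to the window of width $10^{-1}$ appearing in the statement. Since $\de$ is sufficiently small, $\frac{1}{2\al}=\frac{1}{2/3+2\de}>1>10^{-1}$, so the interval $[T_0-10^{-1}T_0^{1-\al},T_0]$ is contained in $[T_0-\frac{1}{2\al}T_0^{1-\al},T_0]$; as $g\goe0$, monotonicity of the integral gives
\[
\int_{T_0-10^{-1}T_0^{1-\al}}^{T_0}\int \aabs{G(u)}\dx\dt \loe \int_{T_0-\frac{1}{2\al}T_0^{1-\al}}^{T_0} g(t)\dt < \ep,
\]
which is precisely \eqref{esti-decay-large-time}.

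There is essentially no analytic obstacle remaining, since all the genuine work has already been absorbed into Proposition \ref{prop-vi-mora-expo}; the only point requiring care is the bookkeeping with the exponent, namely checking both $\al<1$ (needed for Lemma \ref{lem:eledecay} to apply) and $\frac{1}{2\al}\goe 10^{-1}$ (needed for the window inclusion), each of which follows from choosing $\de$ small. This is the exact analogue for the exponential nonlinearity of the power-type corollary \eqref{esti-decay-large-time-power}, and is proved in the same way.
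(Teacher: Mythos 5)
Your proof is correct and is exactly the argument the paper intends (the paper simply states this corollary as an immediate consequence of Lemma \ref{lem:eledecay} applied to $g(t)=\int\aabs{G(u)}\dx$ with the integrability furnished by \eqref{esti-vi-mora-expo} at $T=1$). Your additional bookkeeping — checking $\al=\rev{3}+\de\in(0,1)$ and that $10^{-1}<\frac{1}{2\al}$ so the stated window sits inside the one produced by the lemma — is precisely the detail the paper leaves implicit.
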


\subsection{Proof of scattering}
The idea is similar to the power type case. For $q\in [1,\infty]$, $\epsilon\in \R$, we define $q(\epsilon)$ by
\EQ{
\frac{1}{q(\epsilon)}=\frac{1}{q}-\frac{\epsilon}{2}.
}
Then $(q(\epsilon))'=q'(-\epsilon)$.
We assume $u$ is a solution in Proposition \ref{prop-vi-mora-expo}. 
We have $\norm{u}_{H^1}\lsm 1$ and $\exists \eta>0$ such that
\EQ{
\sup_{t\in \R}\norm{\nabla u}_2^2<\frac{4\pi}{(1+100\eta)\kappa_0}.
}
By Corollary \ref{cor:Labound}, we get
\EQ{
\int_{\R^2} (e^{\kappa_0 |u|^2}-1)^{1(\theta\eta)}dx\lsm_\eta 1, \quad 0\leq \theta\leq 50. 
}
Using H\"older inequality, we get
\EQ{\norm{f(u)}_{L_x^1}\lsm 1.}
For any time interval $I\subset \R$, we define the strong Strichartz space for NLS,
\EQ{
	S(I) := L_t^\I H^1 \cap L_t^{2(\eta)} W_x^{1,\infty(-\eta)}(I\times \R^2),
}
and for NLKG,
\EQ{
\norm{u}_{S(I)} := \norm{u}_{L_t^\I H^1(I\times \R^2)} + \norm{P_{\goe 0}u}_{L_t^{2(\eta)} W_x^{1/2,\infty(-\eta/2)}(I\times \R^2)} + \norm{P_{\loe 0}u}_{L_t^{2} L_x^{\I}(I\times \R^2)}.
}
We also define the weak Strichartz space $W(I) := L_{t,x}^6(I\times \R^2)$. We first show: $\forall \epsilon>0$, $\exists T>0$ such that 
\EQ{\label{eq:weaknormdecayexpo}
\norm{S(t-T)u(T)}_{W_T}<\epsilon, \quad \mbox{for NLS}\\
\norm{\dot S(t-T)u(T)+S(t-T)u_t(T)}_{W_T}<\epsilon, \quad \mbox{for NLKG}
}

First consider NLS. As the power type case, there are three terms $I+II+III$. Similarly, $\norm{II}_{S_T}\lsm 1$. For the term $II$, noting that $\tau=cT^{1-\al}$
\EQ{
\norm{II}_{L_{T,x}^\I}
	\lsm & \int_1^{T-\ta} \rev{\abs{T-s}}s^\beta s^{-\beta}\norm{f(u)}_{1}\ds+\tau^{-1} \\
	\lsm & \tau^{-1}T^\beta+\tau^{-1}\leq \epsilon.}
Lastly we consider the term $III$.  We will use \eqref{esti-decay-large-time} which implies 
\EQ{\label{L6small}
\norm{u}_{L^6_{t,x}([T-\tau,T]\times \R^2)}<\epsilon.
}
Noting that
\EQ{
\norm{f(u)}_{L_t^{2(-\eta)}L_x^{1(\eta)}(I\times \R^2)}\lsm & \norm{\brk{e^{\ka_0\abs{u}^2}-1}\ka_0 |u|^2u}_{L_t^{2(-\eta)}L_x^{1(\eta)}(I\times \R^2)}\\
\lsm& \norm{\brk{e^{\ka_0\abs{u}^2}-1}}_{L_t^\I L_x^{1(49\eta)}}\norm{|u|^2u}_{L_t^{2(-\eta)}L_x^{\infty(-48\eta)}(I\times \R^2)}\\
\lsm& \norm{u}^3_{L_t^{6(-\eta/3)}L_x^{\infty(-16\eta)}(I\times \R^2)}\lsm \norm{u}_{L_{t,x}^{6}(I\times \R^2)}^{a_1}\norm{u}_{S(I)}^{3-a_1}
}
for some $a_1>1$
and
\EQ{
\norm{\nabla f(u)}_{L_t^{2(-\eta)}L_x^{1(\eta)}(I\times \R^2)}\lsm & \norm{\brk{e^{\ka_0\abs{u}^2}-1}\ka_0 \nabla(|u|^2u)}_{L_t^{2(-\eta)}L_x^{1(\eta)}(I\times \R^2)}\\
\lsm& \norm{\brk{e^{\ka_0\abs{u}^2}-1}}_{L_t^\I L_x^{1(49\eta)}}\norm{|u|^2\nabla u}_{L_t^{2(-\eta)}L_x^{\infty(-48\eta)}(I\times \R^2)}\\
\lsm& \norm{\nabla u}_{L_t^{2(\eta)}L_x^{\infty(-\eta)}(I\times \R^2)}\norm{u}^2_{L_t^{\infty(-\eta/2)}L_x^{\infty(-47\eta/2)}(I\times \R^2)}\\
\lsm&\norm{u}_{L_{t,x}^{6}(I\times \R^2)}^{a_2}\norm{u}_{S(I)}^{3-a_2}
}
for some $a_2>0$. 
Using the integral equation
\EQ
{
u(t)=S(t-T+\tau)u(T-\tau)-i\int_{T-\tau}^t S(t-s)f(u)ds
}
and the above estimates, we get
\EQ{
\norm{u}_{S([T-\tau,T])}\lsm 1+\sum_{j=1}^2\norm{u}_{L_{t,x}^{6}([T-\tau,T]\times \R^2)}^{a_j}\norm{u}_{S([T-\tau,T])}^{3-a_j}.
}
By \eqref{L6small} and continuity argument, we get
$\norm{u}_{S([T-\tau,T])}\lsm 1$
and thus
$\norm{III}_{W_T}<\epsilon$.
By the above estimate again and the integral equation, we can get the estimate of the type \eqref{eq:estWISI}. Thus scattering follows. 

For NLKG, the estimates are similar, but we use radial Strichartz estimate \eqref{eq:radialstrichartz} rather than $log$-Sobolev inequality to give a simpler proof. We remark that the radial symmetry is not necessary here. Term $II'$ is similar to term $II$. 
For term $III'$, using fractional chain rule, we have
\EQ{
	\norm{f(u)}_{L_t^{2} W_x^{1/2,1(\eta)}(I\times \R^2)}\lsm & \norm{\brk{e^{\ka_0\abs{u}^2}-1}\ka_0 |u|^2u}_{L_t^{2} W_x^{1/2,1(\eta)}(I\times \R^2)}\\
	\lsm& \norm{\brk{e^{\ka_0\abs{u}^2}-1}}_{L_t^\I L_x^{1(49\eta)}}\norm{|u|^2\jb{\nabla}^{1/2}u}_{L_t^{2} L_x^{\I(-48\eta)}(I\times \R^2)}\\
	\lsm& \norm{P_{\goe 0}\jb{\nabla}^{1/2}u}_{L_t^{2(\eta)} L_x^{\I(-\eta/2)}} \norm{|u|^2}_{L_t^{\I(-\eta)} L_x^{\I(-95\eta/2)}(I\times \R^2)}\\
	& + \norm{|u|^2P_{\loe 0}u}_{L_t^{2} L_x^{\I(-48\eta)}(I\times \R^2)} \\
	\lsm& \norm{u}_{S(I)} \norm{u}^2_{L_t^{\I(-\eta/2)} L_x^{\I(-95\eta/4)}}  + \norm{u}_{L_t^{6} L_x^{\I(-16\eta)}}^2
	\norm{P_{\leq 0}u}_{L_t^{6} L_x^{\I(-16\eta)}}\\
	\lsm & \norm{u}_{L_{t,x}^{6}(I\times \R^2)}^{b_1}\norm{u}_{S(I)}^{3-b_1} + \norm{u}_{L_{t,x}^{6}(I\times \R^2)}^{b_2}\norm{u}_{S(I)}^{3-b_2},
}
for some $b_1,b_2>0$. 

\section*{Acknowledgements}

Z. Guo was partially supported by ARC DP170101060. J. Shen was supported by China Scholarship Council 201706010021. The authors are very grateful to the anonymous referee for the careful reading and valuable suggestions which improve this paper.

\appendix
\section{Variation through Gagliardo-Nirenberg}
We provide an equivalent characterization of variation for power type non-linear terms when $\om=0$, which is based on Gagliardo-Nirenberg inequality.  The result is classical and equivalent to the sign functional approach, but we sketch the proof for completeness. Recall the sharp Gagliardo-Nirenberg(G-N) inequality from \cite{Nagy} and \cite{Weinstein}: let $0<p$ if $d=1$ or $d=2$, and $0<p<4/(d-2)$ if $d\goe 3$, for $g\in H^1$, we have
\EQ{
	\norm{g}_{p+2}^{p+2} \loe \frac{2(p+2)}{4-p(d-2)} \brk{\frac{pd}{4-p(d-2)}}^{-pd/4} \norm{Q_0}_2^{-p} \norm{g}_2^{p+2-pd/2} \norm{\nabla g}_2^{pd/4}, 
}
where $Q_0$ is the unique positive solution of
\EQ{
	-\De Q_0 + Q_0 = \aabs{Q_0}^{p}Q_0.
}
The equality holds if and only if $g(x)=Q_0(x)$ modulo some symmetries.
The ground state $Q_0$ has energy identity
\EQ{
	\norm{\nabla Q_0}_2^2 + \norm{Q_0}_2^2 = \norm{Q_0}_{p+2}^{p+2},
}
and Pohozaev identity
\EQ{
	\half{d-2}\norm{\nabla Q_0}_2^2 + \half d \norm{Q_0}_2^2 = \frac{d}{p+2}\norm{Q_0}_{p+2}^{p+2}.
}
Therefore, we obtain
\EQ{
	\norm{\nabla Q_0}_2^2 = \frac{pd}{4-p(d-2)} \norm{Q_0}_2^2,
}
and
\EQ{
	\norm{Q_0}_{p+2}^{p+2} = \frac{2(p+2)}{4-p(d-2)} \norm{Q_0}_2^2,
}
which gives another form of G-N inequality:
\EQ{
	\norm{g}_{p+2}^{p+2} \loe \frac{2(p+2)}{dp} \brk{\frac{\norm{g}_2^{p+2-pd/2} \norm{\nabla g}_2^{pd/2-2}}{\norm{Q_0}_2^{p+2-pd/2} \norm{\nabla Q_0}_2^{pd/2-2}}} \norm{\nabla g}_2^2.
}
Under the assumption $J(g)<J(Q_0)$, it is easy to prove that
\EQ{
	\norm{g}_2^{p+2-pd/2} \norm{\nabla g}_2^{pd/2-2}<\norm{Q_0}_2^{p+2-pd/2} \norm{\nabla Q_0}_2^{pd/2-2}
}
if and only if
\EQ{
	K(g) := K_{d/2,-1}(g) = \norm{\nabla g}_2^2 -\frac{dp}{2(p+2)} \norm{g}_{p+2}^{p+2}\goe 0.
}
Thus, we can give another form of main theorem for focusing power type NLS and NLKG:
\begin{thm}
	Let $d=2$, $f(u)=\abs{u}^p u$ with $p>2$. Suppose that $u_0\in H^1(\R^2)$ is radial and satisfies $\es(u_0) + M(u_0)/2<J(Q_0)$. 
	If $\norm{u_0}_2^{2} \norm{\nabla u_0}_2^{p-2}<\norm{Q_0}_2^{2} \norm{\nabla Q_0}_2^{p-2}$, the solution of \eqref{equ-main-nls} exists globally and scatters. 
\end{thm}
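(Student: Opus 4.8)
The plan is to deduce this theorem directly from the focusing part~(c) of the main scattering theorem for \eqref{equ-main-nls} by recasting its hypotheses through the Gagliardo--Nirenberg equivalence established above in this appendix. The substantive analytic work has already been done; what remains is a translation of the sub-threshold conditions.

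First I would record the elementary identity $\es(u_0) + M(u_0)/2 = J(u_0)$, which is immediate upon comparing the definitions of $\es$, $M$, and the static energy $J = J^{(1)}$ at $c=1$. Combined with the variational identity $m_{\al_1,\be_1} = J(Q_0)$, the energy hypothesis $\es(u_0) + M(u_0)/2 < J(Q_0)$ is exactly $J(u_0) < m_{\al_1,\be_1}$.

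Second, I would specialize the G--N equivalence derived just above to $d=2$, where the exponents simplify as $p+2-pd/2 = 2$ and $pd/2-2 = p-2$. Under the standing constraint $J(u_0) < J(Q_0)$, the sub-threshold condition $\norm{u_0}_2^{2}\,\norm{\nabla u_0}_2^{p-2} < \norm{Q_0}_2^{2}\,\norm{\nabla Q_0}_2^{p-2}$ is then, by that equivalence, precisely the statement $K(u_0) > 0$, where $K = K_{d/2,-1}$.

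Third, I would choose the admissible pair $(\al_1,\be_1) = (1,-1) = (d/2,-1)$, which one checks satisfies \eqref{range-al-be}, so that $\kab = K_{d/2,-1} = K$ and hence $\kab(u_0) > 0$. Having verified both $\es(u_0)+M(u_0)/2 < m_{\al_1,\be_1}$ and $\kab(u_0) > 0$, the hypotheses of the focusing case of the main NLS theorem are met, and that theorem yields the asserted global existence and scattering. There is no genuine obstacle remaining: the only points requiring minor care are confirming that $(1,-1)$ lies in the admissible range \eqref{range-al-be} and that it realizes the virial functional $K$, so that one need not even invoke the independence of $\K^{\pm}$ from the choice of $(\al_1,\be_1)$.
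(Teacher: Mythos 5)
Your proposal is correct and follows essentially the same route as the paper, which presents this theorem as an immediate corollary of the Gagliardo--Nirenberg equivalence established in the appendix: one identifies $\es(u_0)+M(u_0)/2=J(u_0)$ and $m_{\al_1,\be_1}=J(Q_0)$, notes that for $d=2$ the exponents reduce to $2$ and $p-2$ so the stated condition is equivalent to $K(u_0)=K_{1,-1}(u_0)>0$, and then applies the focusing case of the main NLS theorem with the admissible pair $(\al_1,\be_1)=(1,-1)$. Your verification that $(1,-1)$ satisfies \eqref{range-al-be} (and has $\al_1\ne 0$, as required by the variational estimate in $d=2$) is the only bookkeeping needed, and you have done it correctly.
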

\begin{thm}
	Let $d=2$, $f(u)=\abs{u}^p u$ with $p>2$. Suppose that $(u_0,u_1)\in H^1\times L^2(\R^2)$ is radial and satisfies $\ek(u_0,u_1)<J(Q_0)$. If $\norm{u_0}_2^{2} \norm{\nabla u_0}_2^{p-2} < \norm{Q_0}_2^{2} \norm{\nabla Q_0}_2^{p-2}$, the solution of \eqref{equ-main-kg} exists globally and scatters. 
\end{thm}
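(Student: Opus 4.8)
The plan is to recognize that the Gagliardo--Nirenberg hypothesis is nothing but a restatement, for the distinguished virial pair $(\al_1,\be_1)=(d/2,-1)=(1,-1)$, of the sign condition $\kab(u_0)>0$ that drives the focusing case of the main theorem for \eqref{equ-main-kg}. Once this identification is made, the conclusion is immediate: global existence and scattering follow by invoking that theorem, whose proof is already supplied by the variational theory, the virial--Morawetz estimate of Proposition \ref{prop-vi-mora-power}, and the space--time bounds of the scattering subsection.

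First I would fix $(\al_1,\be_1)=(1,-1)$ and check that it satisfies \eqref{range-al-be} when $d=2$, since $\al_1=1\goe0$, $2\al_1+d\be_1=0\goe0$, $2\al_1+(d-2)\be_1=2\goe0$, and $(\al_1,\be_1)\ne(0,0)$. For this pair the functional $\kab$ is exactly the virial functional $K=K_{d/2,-1}$. By the variational theorem of \cite{ibrahim2011scattering} one has $m_{\al_1,\be_1}=J(Q_0)$, so the hypothesis $\ek(u_0,u_1)<J(Q_0)$ is precisely $\ek(u_0,u_1)<m_{\al_1,\be_1}$. Moreover, since $\ek(u_0,u_1)=J(u_0)+\half 1\norm{u_1}_2^2\goe J(u_0)$, the energy assumption yields $J(u_0)<J(Q_0)$, which is exactly the regime in which the Gagliardo--Nirenberg characterization of the appendix operates.

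Next I would invoke that characterization with $u_0$ in the role of $g$. Specializing the exponents to $d=2$, where $p+2-pd/2=2$ and $pd/2-2=p-2$, the appendix shows that under $J(u_0)<J(Q_0)$ the inequality $\norm{u_0}_2^{2}\norm{\nabla u_0}_2^{p-2}<\norm{Q_0}_2^{2}\norm{\nabla Q_0}_2^{p-2}$ holds if and only if $K(u_0)=K_{1,-1}(u_0)>0$. Thus the stated product hypothesis is equivalent to $\kab(u_0)>0$ for our chosen pair, so both hypotheses of the focusing case of \eqref{equ-main-kg} are verified, and that theorem delivers global existence and scattering.

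The only obstacle here is bookkeeping rather than analysis: one must confirm that the exponents of the sharp Gagliardo--Nirenberg inequality collapse to the displayed form when $d=2$, and that the functional $K_{d/2,-1}$ appearing in the appendix is literally $\kab$ for $(\al_1,\be_1)=(1,-1)$, so that the equivalence transfers without loss. The degenerate case $u_0=0$, in which the product inequality holds trivially, is a small--data situation that scatters directly and needs no separate argument. No estimate beyond those already established is required.
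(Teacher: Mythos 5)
Your proposal is correct and follows essentially the same route as the paper: the appendix reduces the theorem to the observation that, for $(\al_1,\be_1)=(d/2,-1)$ and under $J(u_0)<J(Q_0)$, the sharp Gagliardo--Nirenberg inequality makes the product condition $\norm{u_0}_2^{2}\norm{\nabla u_0}_2^{p-2}<\norm{Q_0}_2^{2}\norm{\nabla Q_0}_2^{p-2}$ equivalent to $K_{d/2,-1}(u_0)>0$, after which the focusing case of the main NLKG theorem (with $m_{\al_1,\be_1}=J(Q_0)$) applies verbatim. Your additional bookkeeping --- verifying \eqref{range-al-be} for $(1,-1)$, noting $J(u_0)\loe \ek(u_0,u_1)$, and checking the $d=2$ exponents --- matches what the paper leaves implicit.
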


\section*{Data Availability Statement}

Data sharing is not applicable to this article as no new data were created or analyzed in this
study.

\end{document}